\def\figurename{Figure} 
\renewcommand{\fnum@figure}[1]{\figurename~\thefigure.}
\def\tablename{Table} 
\renewcommand{\fnum@table}[1]{\tablename~\thetable.}
\newtheorem{theorem}{Theorem}[section]
\newtheorem{lemma}[theorem]{Lemma}
\theoremstyle{definition}
\theoremstyle{remark}
\newtheorem{remark}[theorem]{Remark}
\numberwithin{equation}{section}
\def\P{\mathbb P}
\def\R{\mathbb R}
\def\E{\mathbb E}
\def\M{\mathbb M}
\def\E{\mathbb E}
\def\cal{\mathcal}
\begin{document}
\title{\bfseries\scshape{Numerical scheme for backward doubly stochastic differential equations}}
\author{\bfseries\scshape Auguste Aman\thanks{augusteaman5@yahoo.fr;\; Supported by AUF post doctoral grant 07-08, Réf:PC-420/2460}\thanks{This work was partially performed when the author visit Université Cadi Ayyad of Marrakech}\\
UFR Mathématiques et Informatique\\ Universit\'{e} de Cocody, \\BP 582 Abidjan 22, C\^{o}te d'Ivoire}

\date{}
\maketitle \thispagestyle{empty} \setcounter{page}{1}

\begin{abstract}
We study a discrete-time approximation for solutions of systems of decoupled forward-backward doubly
stochastic differential equations (FBDSDEs). Assuming that the coefficients are Lipschitz-continuous, we prove the convergence of the scheme when the step of time discretization, $|\pi|$ goes to zero. The rate of convergence is exactly equal to $|\pi|^{1/2}$. The proof is based on a generalization of a remarkable result
on the $^{2}$-regularity of the solution of the backward equation derived by J. Zhang \cite{Z}.
\end{abstract}

\noindent {\bf AMS Subject Classification}: 65C05; 60H07; 62G08

\vspace{.08in} \noindent \textbf{Keywords}: Backward doubly SDEs; Discrete-time approximation.

\section{Introduction}
\setcounter{theorem}{0} \setcounter{equation}{0}
In this paper, we study a discrete time approximation scheme for the solution of a system of
the (decoupled) forward-backward doubly stochastic differential equations (FBDSDEs, in short) on the time interval $[0,T]$:
\begin{eqnarray}
\left\{
\begin{array}{l}
X_{t}=x+\int_{0}^{t}b(X_{s})ds+\int_{0}^{t}\sigma(X_{s})dW_{s}\\\\
Y_{t}=h(X_{T})+\int_{t}^{T}f(s,X_{s},Y_{s},Z_{s})\,
ds+\int_{t}^{T}g(s,X_{s},Y_{s})d\overleftarrow{B_{s}} -\int_{t}^{T}Z_{s}dW_{s}.\label{FBSDE}
\end{array}\right.
\end{eqnarray}
Here $W$ and $B$ are two independent  Brownian motion such that, the integral with respect to ${B_t}$ is a backward It\^{o} integral
and the one with respect to ${ W_t}$ is a standard forward It\^{o} integral. Let us note that such equations naturally appear in probabilistic interpretation of stochastic partial differential equations (SPDEs, in short). Indeed, under standard Lipschitz assumptions on the coefficients $b, \sigma, f, g$, and $h$, the existence and uniqueness of the solution $(Y,Z)$ have been proved by Pardoux and Peng \cite{PP1}. Moreover, they give the link between the classical solution of SPDE in the following. More precisely let consider the SPDE
\begin{eqnarray}
-\frac{\partial}{\partial t}u(t,x)-[{
L}u(t,x)-f(t,x,u(t,x),\sigma^{*}(x)\nabla u(t,x))]-g(t,x,u(t,x))\lozenge B_{s}=0,\nonumber\\
u(T,x)=h(x),\label{SPDE}
\end{eqnarray}
where $\lozenge$ denotes the Wick product and, thus, indicates that the differential is to understand in It\^{o}'s sense, and
\begin{eqnarray*}
L=\frac{1}{2}\sum_{i,j}^{n}(\sigma\sigma^{*})_{ij}(x)\frac{\partial^2}{\partial x_{i}\partial x_j}+\sum_{i}^{n}b_{i}(x)\frac{\partial}{\partial x_{i}}.
\end{eqnarray*}
Under more strengthen assumptions (the coefficients $f,\, g$ and $h$ are $\mathcal{C}^{3}$ class), the component $Y$ of the solution of $(\ref{FBSDE})$ is related to the classical solution $u$ of SPDE (\ref{SPDE}), in the sense that
\begin{eqnarray}
Y_t=u(t,X_t).\label{FBSDE-SPDE}
\end{eqnarray}
Furthermore, Buckdahn and Ma relax the assumptions of coefficient to standard Lipschitz one and they proved among other that  the relation $(\ref{FBSDE-SPDE})$ give the stochastic viscosity solution of SPDE $(\ref{SPDE})$. Thus, solving $(\ref{FBSDE})$ or $(\ref{SPDE})$ is essentially the same. However it is known that only a limited number of BDSDE can be solved explicitly. In order to solved the large class of BDSDE and of course provide an alternative to classical numerical schemes for a large class of SPDE, the numerical method and numerical algorithm is very helpful.

In the one stochastic case, i.e $g\equiv0$, the numerical approximation of $(\ref{FBSDE})$ has already been
studied in the literature; see e.g. Zhang \cite{Z}, Bally and Pages \cite{BP}, Bouchard and Touzi \cite{Tal} or
Gobet et al. \cite{Gal}. In \cite{Tal}, the authors suggest the following implicit scheme. Given a partition regular grid
$\pi: 0=t_0<t_1<....<t_n=T$ of the interval $[0,T]$, they approximate $X$ by its well-know Euler scheme $X^{\pi}$ and $(Y, Z)$, by the
discrete-time process $(Y^{\pi}_{t_i}, Z^{\pi}_{t_i})_{0\leq i\leq n}$ defined backward by
\begin{eqnarray*}
\left\{
\begin{array}{l}
Z^{\pi}_{t_i}=\frac{1}{\Delta_{i+1}^{\pi}}\E\left[Y^{\pi}_{t_{i+1}}\Delta^{\pi} W_{t_{i+1}}|\mathcal{F}_{t_i}\right]\\\\
Y^{\pi}_{t_i}=\E\left[Y^{\pi}_{t_{i+1}}|\mathcal{F}_{t_i}\right]+\Delta_{i+1}^{\pi}f(t_i,X^{\pi}_{t_i},Y^{\pi}_{t_i},Z^{\pi}_{t_i}),
\end{array}\right.
\end{eqnarray*}
where $\displaystyle{Y^{\pi}_{t_n}=h(X^{\pi}_T)},\ \Delta^{\pi} W_{i+1}=W_{t_{i+1}}-W_{t_i}$ and $\Delta_{i+1}^{\pi}=t_{i+1}-t_i$. Then, it turn out that
the discretization error
\begin{eqnarray*}
Err_{\pi}(Y,Z)=\left\{\sup_{0\leq t\leq T}\E|Y_{t}-Y_{t}^{\pi}|^{2}+\int_{0}^{T}\E\left[|Z_{s}-Z^{\pi}_{s}|^{2}\right]ds\right\}^{1/2}
\end{eqnarray*}
is intimately related to the quantity
\begin{eqnarray*}
\sum_{i=1}^{n-1}\int^{t_{i+1}}_{t_i}\E\left[|Z_{s}-\tilde{Z}_{t_i}|^{2}\right]ds\;\;\;\;\;\mbox{where}\;\;\;\;\;\tilde{Z}_{t_i}=\frac{1}{\Delta_{i+1}^{\pi}}
\E\left[\int_{t_i}^{t_{i+1}}Z_s|\mathcal{F}_{t_i}\right].
\end{eqnarray*}
Under Lipschitz continuity conditions on the coefficients, Zhang \cite{Z} was able to prove that the
latter is of order of $|\pi|$, the partition's mesh. This remarkable result allows them to derive the bound $Err_{\pi}(Y, Z)\leq C|\pi|^{1/2}$. Observe that this rate of convergence cannot be improved in general. Consider, for example, the case where $X$ is equal to the Brownian motion $W$, $h$ is the identity, and $f = 0$.
Then, $Y = W$ and $Y^{\pi}_{t_i} = W_{t_i}$.

In this paper, we extend the approach of Bouchard and Touzi \cite{Tal}, and approximate the solution of $(\ref{FBSDE})$ by the following backward scheme.
\begin{eqnarray*}
\left\{
\begin{array}{l}
Z^{\pi}_{t_{i}}=\frac{1}{\Delta_{i+1}}\E^{\pi}_{i}\left[\left(Y_{t_{i+1}}^{\pi}
+g(t_{i+1},X_{t_{i+1}}^{\pi},Y_{t_{i+1}}^{\pi})\Delta B_{i+1}\right)\Delta W_{i+1}\right],\label{a5}\\\\
Y^{\pi}_{t_{i}}=\E_{i}^{\pi}\left[Y_{t_{i+1}}^{\pi}+g(t_{i+1},X^{\pi}_{t_{i+1}},Y^{\pi}_{t_{i+1}})\Delta B_{i+1}\right]
+f(t_{i},X^{\pi}_{t_{i}},Y^{\pi}_{t_{i}},Z^{\pi}_{t_{i}})\Delta_{i+1}
\end{array}\right.
\end{eqnarray*}
where $Y_{t_n}^{\pi}= h(X^{\pi}_{T})$ and $\Delta B_{i+1}=B_{t_{i+1}}-B_{t_i}$. By adapting the arguments of Bouchard and Touzi \cite{Tal}, we first prove that our
discretization error $Err_{\pi}(Y, Z)$ converge to $0$ as the step of the discretization $|\pi|$ tends to $0$. We then provide upper bounds on
\begin{eqnarray*}
\max_{i<n}\sup_{0\leq t\leq t_i}\E|Y_{t}-Y_{t_i}|^{2}+\sum_{i}^{n-1}\int^{t_{i+1}}_{t_i}\E\left[|Z_{s}-\tilde{Z}_{t_i}|^{2}\right]ds.
\end{eqnarray*}
When the coefficients are Lipschitz continuous, we obtain
\begin{eqnarray*}
\max_{i<n}\sup_{0\leq t\leq t_i}\E|Y_{t}-Y_{t_i}|^{2}+\sum_{i}^{n-1}\int^{t_{i+1}}_{t_i}\E\left[|Z_{s}-\tilde{Z}_{t_i}|^{2}\right]ds<C|\pi|.
\end{eqnarray*}
This extends to our framework the remarkable result derived by Zhang \cite{Z}. It allows us to show that our discrete-time scheme
achieves, under the standard Lipschitz conditions, a rate of convergence exactly equal to $|\pi|^{1/2}$.

Observe that, in opposition to algorithms based on the approximation of the Brownian motion by discrete processes taking a finite number of possible values (see e.g. \cite{Y} and the references therein), our scheme does not provide a fully implementable numerical procedure, since it involves the computation of a large number of conditional expectations.

This paper is organized as follows. In Section 2, we introduce some fundamental knowledge and assumptions of BDSDEs and give extension of the remarkable $L^2$-regularity results derived by Zhang \cite{Z} to the doubly stochastic case, which is our first main result. In Section 3, we describe the approximation
scheme and state convergence result, our second main result.

{\bf Notations.}  We shall denote by $\M^{n,d}$ the set of all $n\times d$ matrices with real coefficients. We simply denote $\R^{n}= \M^{n,1}$ and
$\M^{n}= \M^{n,n}$. We shall denote by $\|a\|=(\sum_{i,j} a^{2}_{i,j})^{1/2}$ the Euclidian norm on $\M^{n,d}$, $a^{*}$ the transpose of $a$, $a^{k}$ the $k$-th column of $a$. To simplify, we denote respectively by $|x|$ and $a_k$, the norm and the  the $k$-th component of $a\in\R^{n}$. Finally, we denote by  $x.y=\sum_{i}x_iy_i$ the scalar product in $\R^{n}$.
\section{Forward-Backward doubly SDEs}
\subsection{Preliminaries and Assumptions}
\setcounter{theorem}{0} \setcounter{equation}{0}
Let $(\Omega_{1},\mathcal{F}_{1},\mbox{I\hspace{-.15em}P}_{1})$ and
$(\Omega_{2},\mathcal{F}_{2},\mbox{I\hspace{-.15em}P}_{2})$ be two complete probability  spaces and $T>0$ a fixed final time. Throughout this paper we consider
$\{W_{t}, 0\leq t\leq T\}$ and $\{B_{t}, 0\leq t\leq T\}$ two mutually independent standard Brownian motions processes, with values respectively in $\R^{d}$ and $\R^{\ell}$, defined respectively on $(\Omega_{1},\mathcal{F}_{1},\mbox{I\hspace{-.15em}P}_{1})$ and
$(\Omega_{2},\mathcal{F}_{2},\mbox{I\hspace{-.15em}P}_{2})$. For any process $\displaystyle{\left(\eta_{s}: 0\leq
s\leq T\right)}$ defined on
$(\Omega_{i},\mathcal{F}_{i},\mbox{I\hspace{-.15em}P}_{i}),\,(
i=1,\,2)$, we denote
\begin{eqnarray*}
\mathcal{F}%
^{\eta}_{s,t}=\sigma\{\eta_{r}-\eta_{s}, s\leq r \leq t\}\vee
\mathcal{N}, \,\,\mathcal{F}^{\eta}_{t}=\mathcal{F}^{\eta}_{0,t}.
\end{eqnarray*}
 In the sequel of the paper unless otherwise specified we denote
\begin{eqnarray*}
\Omega=\Omega_{1}\times\Omega_{2},\,\,\mathcal{F}=\mathcal{F}_{1}\otimes\mathcal{F}_{2}\,\,
\mbox{ and}\,\, \P=\P_{1}\otimes\P_{2}.
\end{eqnarray*}
Moreover, we put
\begin{eqnarray*}
\mathcal{F}_{t}=\mathcal{F}^{W}_{t}\otimes\mathcal{F}_{T}^{B}\vee\mathcal{N}
\end{eqnarray*}
where $\mathcal{N}$ is the collection of $\P$-null sets and denote ${\bf F}=(\mathcal{F}_{t})_{t\geq 0}$. Further, for random
variables $\epsilon(\omega_{1}),\,\omega_{1}\in \Omega_{1}$ and $\beta(\omega_{2}),\, \omega_{2}\in\Omega_{2}$, we view them as random variables in $\Omega $ by the following identification:
\begin{eqnarray*}
\epsilon(\omega)=\epsilon(\omega_{1});\,\,\,\,\,\beta(\omega)=\beta(\omega_{2}),\; \omega=(\omega_1,\omega_2).
\end{eqnarray*}
Given $C>0$, we consider two functions $b:\R^{d}\rightarrow\R^{d}$ and
$\sigma:\R^{d}\rightarrow\M^{d}$ two
functions satisfying the Lipschitz condition
\begin{description}
\item $({\bf H1})\, |b(x)-b(x')|+\|\sigma(x)-\sigma(x')\|\leq C|x-x'|, \,\, \forall\,\, x,x'\in\R^{d}$.
\end{description}
Then it is well-known that (see e.g Karatzas and Shreve \cite{KS}), for any initial condition $x\in\R^{d}$, the forward stochastic differential equation
\begin{eqnarray}
 X_{t}=x+\int_{0}^{t}b(X_{s})ds+\int_{0}^{t}\sigma(X_{s})dW_{s},\quad t\in[0,T]
 \label{a_{0}}
\end{eqnarray}
has a $\mathcal{F}_{t}$-adapted solution $(X_{t})_{0\leq t\leq T}$ satisfying
\begin{eqnarray*}
\E(\sup_{0\leq t\leq T}|X_{t}|^{2})<\infty.
\end{eqnarray*}

Before introducing the backward doubly SDE, we need to define some additional notations. Given some real number $p\geq 2$, we denote by ${\cal{S}}^{p}$
the set of real valued adapted c\`{a}dl\`{a}g processes $Y$ such that
\begin{eqnarray*}
\|Y \|_{{\cal{S}}^{p}}=\E\left[\sup_{0\leq t\leq T}|Y_t|^{p}\right]<\infty.
\end{eqnarray*}
${\cal{H}}^{p}$ is the set of progressively measurable $\R^{d}$-valued processes $Z$ such that
\begin{eqnarray*}
\|Z\|_{{\cal{H}}^{p}}=\E\left[\int_{0}^{T}|Z_t|^{p}dt\right]^{1/p}<\infty.
\end{eqnarray*}
The set ${\cal{B}}^{p}={\cal{S}}^{p}\times{\cal{H}}^{p}$ is endowed with the norm
\begin{eqnarray*}
\|(Y,Z)\|_{{\cal{B}}^{p}}=\left(\|Y \|^{p}_{{\cal{S}}^{p}}+\|Z \|^{p}_{{\cal{H}}^{p}}\right)^{1/p}.
\end{eqnarray*}

The aim of this paper is to study a discrete-time approximation of the pair $(Y, Z)$ solution
on $[0, T ]$ of the backward doubly stochastic differential equation
\begin{eqnarray}
Y_{t}=h(X_{T})+\int_{t}^{T}f(s,X_{s},Y_{s},Z_{s})\,
ds+\int_{t}^{T}g(s,X_{s},Y_{s})d\overleftarrow{B_{s}} -\int_{t}^{T}Z_{s}dW_{s},\;\;
0\leq t\leq T. \label{a00}
\end{eqnarray}
By a solution, we mean a triplet $(Y, Z)\in {\cal{B}}^{p}$ satisfying $(\ref{a00})$.

In order to ensure the existence and uniqueness of a solution to $(\ref{a00})$, and the convergence of our discrete-time approximation, we assume that the map
$f:\, [0,T]\times\R^{d}\times\R\times\R^{d}\xrightarrow{}\R,\,
g:\, [0,T]\times\R^{d}\times\R\xrightarrow{}\R^{\ell }$ and $h:\,
\R^{d}\xrightarrow{}\R$ satisfied the Lipschitz condition:
\begin{description}
\item $({\bf H2})$
\begin{description}
\item $(i)\; |f(s,x,y,z)-f(s',x',y',z')|^{2}\leq C\left(|s-s'|^{2}+|x-x'|^{2}+|y-y'|^{2}+|z-z'|^{2}\right)$
\item $(ii)\; |g(s,x,y)-g(s,x',y')|^{2} \leq  C(|s-s'|^{2}+|x-x'|^2+|y-y'|^{2})$
\item $(iii)\; |h(x)-h(x')|^{2}\leq C|x-x'|^{2}$
\end{description}
\end{description}
for some constant $C>0$ independent of all the variables.
\begin{remark}
In order to ensure the existence and uniqueness to the solution of  $(\ref{a00})$, we need only that $f$ and $g$ are Lipschitz with respect variables $y$ and $z$. See Pardoux and Peng \cite{PP1} for more detail.
\end{remark}

The following lemmas collect without proof, some standard results in SDE and BDSDE literature. We list them for ready references. For ease of notation, we shall denote by $C_p$ a generic constant depending only on $p$, the constants $C,\, b(0),\, \sigma(0),\, h(0)$ and $T$ and the functions $f(.,0,0,0)$ and $g(.,0,0)$.
\begin{lemma}
\label{L1}
Assume $b$ and $\sigma$ satisfy $({\bf H1})$ and $X$ be the unique solution of forward SDE $(\ref{a_{0}})$. Then
\begin{eqnarray*}
\|X\|_{{\cal{S}}^{p}}^{p}\leq C_p(1+|x|^{p})
\end{eqnarray*}
and
\begin{eqnarray*}
\E\left[|X_t-X_s|^{p}\right]\leq C_p(1+|x|^{p})|t-s|^{p/2}.
\end{eqnarray*}
\end{lemma}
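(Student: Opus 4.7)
The plan is to establish both bounds by classical SDE techniques: the first by Burkholder--Davis--Gundy (BDG) combined with Gronwall, and the second by estimating the increment on $[s,t]$ using the first bound as an input. Throughout, I use that $(\mathbf{H1})$ yields the linear growth bounds $|b(y)|\le |b(0)|+C|y|$ and $\|\sigma(y)\|\le \|\sigma(0)\|+C|y|$.

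For the first estimate, start from the defining SDE and apply the elementary inequality $|a+b+c|^p\le 3^{p-1}(|a|^p+|b|^p+|c|^p)$, then take $\sup_{u\le t}$ and expectation. The drift contribution is handled by Jensen's inequality,
\begin{eqnarray*}
\E\sup_{u\le t}\Bigl|\int_0^u b(X_r)\,dr\Bigr|^p\le C_p\,T^{p-1}\int_0^t \E|b(X_r)|^p\,dr,
\end{eqnarray*}
while BDG gives for the martingale part
\begin{eqnarray*}
\E\sup_{u\le t}\Bigl|\int_0^u \sigma(X_r)\,dW_r\Bigr|^p\le C_p\,\E\Bigl(\int_0^t\|\sigma(X_r)\|^2 dr\Bigr)^{p/2}\le C_p\,T^{p/2-1}\int_0^t \E\|\sigma(X_r)\|^p\,dr.
\end{eqnarray*}
Plugging in the linear growth bounds gives
\begin{eqnarray*}
\E\sup_{u\le t}|X_u|^p\le C_p(1+|x|^p)+C_p\int_0^t \E\sup_{r\le u}|X_r|^p\,du,
\end{eqnarray*}
and Gronwall's lemma yields $\|X\|_{{\cal S}^p}^p\le C_p(1+|x|^p)$.

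For the second estimate, write $X_t-X_s=\int_s^t b(X_r)\,dr+\int_s^t \sigma(X_r)\,dW_r$ and apply the same Jensen/BDG combination on the interval $[s,t]$ instead of $[0,t]$, obtaining
\begin{eqnarray*}
\E|X_t-X_s|^p\le C_p|t-s|^{p-1}\int_s^t\E|b(X_r)|^p\,dr+C_p|t-s|^{p/2-1}\int_s^t\E\|\sigma(X_r)\|^p\,dr.
\end{eqnarray*}
The linear growth bounds and the first part of the lemma give $\E|b(X_r)|^p+\E\|\sigma(X_r)\|^p\le C_p(1+|x|^p)$ uniformly in $r\in[0,T]$, so the right-hand side is at most $C_p(1+|x|^p)\bigl(|t-s|^p+|t-s|^{p/2}\bigr)$. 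Since $p\ge 2$ and $|t-s|\le T$, the term $|t-s|^{p/2}$ dominates, producing the announced bound.

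There is essentially no obstacle here: this is a textbook argument and depends on nothing beyond $(\mathbf{H1})$, BDG, and Gronwall. The only point requiring mild attention is the bookkeeping of constants so that the $C_p$ absorbs the quantities $|b(0)|,\|\sigma(0)\|,C,T$ advertised in the statement, and the remark that on the bounded interval $[0,T]$ the lower power $|t-s|^{p/2}$ governs the second estimate.
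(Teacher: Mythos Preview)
The paper does not prove this lemma: it is stated as one of the ``standard results in SDE and BDSDE literature'' collected ``without proof,'' with an implicit reference to Karatzas--Shreve. Your argument is the standard one and is correct; the only unstated but routine point is the a priori finiteness of $\E\sup_{u\le t}|X_u|^p$ needed before invoking Gronwall, which is handled by a localization/stopping argument absorbed into the constant $C_p$.
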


\begin{lemma}
\label{L2}
Assume $({\bf H2})$ and $(Y,Z)$ be the unique solution of backward doubly SDE $(\ref{a00})$. Then
\begin{eqnarray*}
\|(Y,Z)\|_{{\cal{B}}^{p}}^{p}\leq C_{p}(1+|x|^{p})
\end{eqnarray*}
and
\begin{eqnarray*}
\E\left[|Y_t-Y_s|^{p}\right]\leq C_{p}\left\{(1+|x|^{p})|t-s|^{p-1}+\|Z\|_{{\cal{H}}^{p}}^{p}\right\}.
\end{eqnarray*}
\end{lemma}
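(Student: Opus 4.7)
The plan is to establish both bounds along the classical Pardoux–Peng \cite{PP1} route for BDSDEs, relying on the doubly-stochastic Itô formula, Burkholder–Davis–Gundy (BDG), Young's inequality, and Gronwall.

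\textbf{Step 1: the $\mathcal{B}^p$ bound.} First apply the doubly-stochastic Itô formula to $|Y_t|^2$ to get
\begin{eqnarray*}
|Y_t|^2+\int_t^T |Z_s|^2 ds &=& |h(X_T)|^2+2\int_t^T Y_s\cdot f(s,X_s,Y_s,Z_s)\,ds+\int_t^T\|g(s,X_s,Y_s)\|^2 ds\\
&&+\;2\int_t^T Y_s\cdot g(s,X_s,Y_s)\,d\overleftarrow{B_s}-2\int_t^T Y_s\cdot Z_s\,dW_s,
\end{eqnarray*}
the key point being that the backward integral contributes $+\int |g|^2$ to the quadratic variation. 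Using (H2) together with $2ab\le \varepsilon a^2+\varepsilon^{-1}b^2$ to absorb a small fraction of $|Z_s|^2$ on the right, taking expectations (both stochastic integrals are martingales in the relevant filtration and vanish in mean), and then invoking Gronwall gives the $p=2$ version:
$$\sup_{0\leq t\leq T}\E|Y_t|^2+\E\!\int_0^T|Z_s|^2 ds\;\le\; C\bigl(1+\E\sup_{t}|X_t|^2\bigr),$$
which combined with Lemma \ref{L1} delivers the result for $p=2$. For $p>2$, either apply Itô to $|Y_t|^p$ or raise the $|Y_t|^2$-identity to the power $p/2$, then apply BDG to the two stochastic integrals to control $\E\sup_t|Y_t|^p$ by $\E\bigl(\int_0^T |Z_s|^2 ds\bigr)^{p/2}$ plus drift and $|g|^2$ terms; Young's inequality again absorbs a small multiple of the quadratic-variation term into the left side, and Gronwall finishes the argument. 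A final appeal to Lemma \ref{L1} converts $\E\sup_t|X_t|^p$ into $1+|x|^p$.

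\textbf{Step 2: the time-increment bound.} For $s<t$, subtracting the BDSDE at times $s$ and $t$ yields
$$Y_t-Y_s\;=\;-\!\int_s^t f(r,X_r,Y_r,Z_r)\,dr\;-\!\int_s^t g(r,X_r,Y_r)\,d\overleftarrow{B_r}\;+\!\int_s^t Z_r\,dW_r.$$
Applying $|a+b+c|^p\le 3^{p-1}(|a|^p+|b|^p+|c|^p)$, I estimate the three pieces separately. Hölder on the Lebesgue integral gives $\E|\int_s^t f\,dr|^p\le |t-s|^{p-1}\int_s^t\E|f(r,X_r,Y_r,Z_r)|^p\,dr$, and the Lipschitz bound (H2)(i) together with Step 1 and Lemma \ref{L1} makes this $\le C_p(1+|x|^p)|t-s|^{p-1}$. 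BDG on the backward integral gives $C_p\E\bigl(\int_s^t \|g\|^2\,dr\bigr)^{p/2}$, which by Jensen is at most $C_p|t-s|^{p/2-1}\int_s^t\E\|g(r,X_r,Y_r)\|^p dr$; the Lipschitz bound on $g$ and Lemma \ref{L1} bound this by $C_p(1+|x|^p)|t-s|^{p/2}\leq C_p(1+|x|^p)|t-s|^{p-1}$ (for $p\ge 2$ and $|t-s|\le T$). Finally BDG on $\int_s^t Z\,dW$ gives $C_p\E\bigl(\int_s^t|Z_r|^2 dr\bigr)^{p/2}$; bounding this crudely by Jensen and monotonicity of integration yields the stated $\|Z\|_{\mathcal{H}^p}^p$ contribution.

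\textbf{Main obstacle.} The delicate point is the doubly-stochastic Itô formula and the simultaneous handling of the forward and backward integrals in Step 1: one must correctly identify the sign of the $|g|^2$ bracket term and justify that the $d\overleftarrow{B}$-integral is a true martingale (in the reverse filtration sense) whose expectation vanishes. Once the Itô formula is in hand, the rest is Young, BDG and Gronwall bookkeeping. I will simply invoke the doubly-stochastic Itô formula from Pardoux–Peng \cite{PP1} rather than reproving it, after which the two estimates reduce to essentially the same computations that give the standard BDSDE a priori bounds.
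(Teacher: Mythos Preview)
The paper does not prove this lemma at all: it is listed among ``standard results in SDE and BDSDE literature'' that are ``collected without proof''. Your outline is exactly the classical Pardoux--Peng route one would expect, and Step~1 is correct as written.

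There is, however, a genuine slip in Step~2. When bounding the backward integral you obtain $\E\big(\int_s^t\|g\|^2\,dr\big)^{p/2}\le C_p(1+|x|^p)|t-s|^{p/2}$ and then claim this is $\le C_p(1+|x|^p)|t-s|^{p-1}$ ``for $p\ge 2$ and $|t-s|\le T$''. That inequality goes the wrong way: for $p>2$ one has $p-1>p/2$, so $|t-s|^{p/2}\not\le C|t-s|^{p-1}$ as $|t-s|\to 0$. In fact this is not a flaw in your method but in the lemma as stated: take $f\equiv 0$, $h\equiv 0$, $g\equiv 1$, so that $Y_t=B_T-B_t$, $Z\equiv 0$; then $\E|Y_t-Y_s|^p=C_p|t-s|^{p/2}$, while the right-hand side of the asserted bound reduces to $C_p(1+|x|^p)|t-s|^{p-1}$, which is strictly smaller for $p>2$ and small $|t-s|$. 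The second inequality therefore appears to be misstated for $p>2$; the natural correct exponent coming from the $d\overleftarrow{B}$-integral is $p/2$.

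None of this affects the paper: the only place the increment estimate is invoked (in the proof of Lemma~2.4 and in the error analysis) is with $p=2$, where $|t-s|^{p/2}=|t-s|^{p-1}=|t-s|$ and your argument is perfectly valid. If you want a clean statement for all $p\ge 2$, replace $|t-s|^{p-1}$ by $|t-s|^{p/2}$ (or add a separate $(1+|x|^p)|t-s|^{p/2}$ term) and your Step~2 goes through verbatim.
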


\subsection{$L^{2}$-regularity}
In this subsection we establish the first main result of this
paper, which we shall call the $L^2$-regularity. Such a
regularity, plays a key role
for deriving the rate of convergence of our numerical scheme in Section 4 and, in our mind generalized Theorem 3.4.3 in \cite{Z}.

To begin with, let  $\pi:0=t_0<...<t_n=T$ be a partition of the time interval $[0,T]$, with $|\pi|=\max_{1\leq i\leq n}|t_{i-1}-t_{i}|$, the size of the partition.
and $X$ be the solution of the forward SDE $(\ref{a_{0}})$. We denote by $(Y,Z)$ the solution of the following backward SDE
\begin{eqnarray}
Y_{t}=\phi^{\pi}(X_{t_0},...,X_{t_n})+\int_{t}^{T}f(s,X_s,Y_s,Z_s)\,
ds+\int_{t}^{T}g(s,X_s,Y_s)d\overleftarrow{B_{s}} -\int_{t}^{T}Z_{s}dW_{s},\label{eqpart0}
\end{eqnarray}
the generalized form of BDSDE $(\ref{a00})$. Next, for $X^{\pi}$ the well-know Euler scheme of $X$ that will be explicit in Section 3, let $(Y^{\pi},Z^{\pi})$ be the adapted solution to the following BDSDE
\begin{eqnarray}
Y_{t}^{\pi}=\phi^{\pi}(X_{t_0}^{\pi},...,X_{t_n}^{\pi})+\int_{t}^{T}f(s,X^{\pi}_s,Y^{\pi}_s,Z^{\pi}_s)\,
ds+\int_{t}^{T}g(s,X^{\pi}_s,Y^{\pi}_s)d\overleftarrow{B_{s}} -\int_{t}^{T}Z_{s}^{\pi}dW_{s}.\label{eqpart}
\end{eqnarray}

To simplify presentations, in what follows we assume that $X_t,\ X^{\pi}_t\in\R^d$, and the other processes are all one-dimensional. But the results can be extended to cases with higher-dimensional on this processes without significant difficulties. For simplicity we also denote by $\Xi=(X,Y),\,\Theta=(X,Y,Z)$ and $\Xi^{\pi}=(X^{\pi},Y^{\pi}),\,\Theta^{\pi}=(X^{\pi},Y^{\pi},Z^{\pi})$.

Now we have
\begin{lemma}
\label{L5}
Assume the functions $\phi^{\pi}:\R^{d(n+1)}\rightarrow\R, \, f:[0,T]\times\R^{d}\times\R^{2}\rightarrow\R$ and $g:[0,T]\times\R^{d}\times\R\rightarrow\R$ satisfying assumptions $({\bf H2})$ with adequate norm. For each $1\leq i\leq n$, we define
\begin{eqnarray*}
\tilde{Z}^{\pi}_{t_{i-1}}&=&\frac{1}{\bigtriangleup_{i}^{\pi}}\E_{i-1}^{\pi}\left[\int^{t_i}_{t_{i-1}}Z_{s}ds\right],
\label{a'6}
\end{eqnarray*}
where $\E_{i-1}^{\pi}(.)=\E(.|\mathcal{F}_{t_{i-1}}^{W}\vee \mathcal{F}^{B}_{T})$.
Then
\begin{eqnarray}
&&\limsup_{\pi\rightarrow 0}|\pi|^{-1}\E\left[\max_{1\leq i\leq n}\sup_{t_{i-1}\leq t\leq t_i}|Y_{t}-Y_{t_{i-1}}|^{2}+\sum_{i=1}^{n}\int^{t_i}_{t_{i-1}}|Z_{s}-\tilde{Z}^{\pi}_{t_{i-1}}|^{2}ds\right]< \infty.
\label{a"61}
\end{eqnarray}
\end{lemma}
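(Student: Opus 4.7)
I would split the estimate into a $Y$-part and a $Z$-part, each of order $|\pi|$. For the $Y$-part, starting from the BDSDE itself gives, for $t\in[t_{i-1},t_i]$,
\begin{equation*}
Y_t - Y_{t_{i-1}} = -\int_{t_{i-1}}^{t}f(s,X_s,Y_s,Z_s)\,ds - \int_{t_{i-1}}^{t}g(s,X_s,Y_s)\,d\overleftarrow{B_s} + \int_{t_{i-1}}^{t}Z_s\,dW_s.
\end{equation*}
Squaring, taking $\sup_{t\in[t_{i-1},t_i]}$ and then expectation, I would combine Cauchy--Schwarz with the BDG inequality applied to both stochastic integrals. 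Using the Lipschitz condition (\textbf{H2}) together with the a priori bound of Lemma~\ref{L2}, this produces a bound of order $\Delta_i^\pi$ on each interval. To control the expected maximum over $i$ at the same order (rather than just the sum), I would invoke the representation $Y_t = u(t,X_t)$ coming from the FBDSDE--SPDE connection, use the Lipschitz regularity of $u$ in $x$ and $1/2$-H\"older regularity in $t$, and combine it with the moment bound $\E \max_i \sup_{t\in[t_{i-1},t_i]}|X_t - X_{t_{i-1}}|^2 = O(|\pi|)$, which follows from Lemma~\ref{L1} by a maximal-inequality argument on the Euler scheme.

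For the $Z$-part, the key observation is the $L^2$-orthogonality of the conditional expectation. Since $Z_{t_{i-1}}$ is already $\mathcal F^W_{t_{i-1}} \vee \mathcal F^B_T$-measurable and $\tilde Z^\pi_{t_{i-1}}$ is, by definition, the best such approximation in $L^2$ of $(\Delta_i^\pi)^{-1}\int_{t_{i-1}}^{t_i}Z_s\,ds$, I obtain
\begin{equation*}
\E \int_{t_{i-1}}^{t_i}|Z_s - \tilde Z^\pi_{t_{i-1}}|^2\,ds \;\le\; \E \int_{t_{i-1}}^{t_i}|Z_s - Z_{t_{i-1}}|^2\,ds.
\end{equation*}
Summing in $i$, the problem reduces to establishing the pointwise modulus
\begin{equation*}
\E|Z_t - Z_s|^2 \le C(t-s), \qquad 0 \le s \le t \le T,
\end{equation*}
after which $\sum_i \int_{t_{i-1}}^{t_i}\E|Z_s - Z_{t_{i-1}}|^2\,ds = O(|\pi|)$ is immediate.

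The hard part, and the genuine content of the lemma, is exactly this $L^2$-modulus of continuity for $Z$; it is here that one has to generalize Zhang's Theorem~3.4.3 to the doubly-stochastic framework. My plan is to use Malliavin calculus: working in the Wiener space $(\Omega_1,\mathcal F_1,\P_1)$ and treating $\omega_2$ as a parameter, I would differentiate the BDSDE with respect to $W$ to obtain a linear BDSDE for the derivative process $(D_r Y_t, D_r Z_t)_{r\le t}$, identify $Z_t$ with the trace $D_t Y_t$, and apply the usual $L^2$-stability for linear BDSDEs (cf.\ Pardoux--Peng~\cite{PP1}) to deduce $1/2$-H\"older continuity of $r\mapsto D_r Y_t$ in $L^2$. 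The main new technical difficulty compared with \cite{Z} is justifying Malliavin differentiability in the presence of the backward It\^o integral $\int g\,d\overleftarrow{B_s}$: one must verify that this integral commutes with $D_r$ in a suitable sense, and that $\phi^\pi$, under hypothesis (\textbf{H2}) with ``adequate norm'', possesses a Malliavin derivative with a uniformly controlled Lipschitz constant (so that bounds are independent of the mesh $\pi$). Once this is secured, combining the two estimates yields the claimed finite $\limsup$.
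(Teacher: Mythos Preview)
Your plan for the $Z$-part is in the right spirit but takes a different technical route from the paper. The paper does not work with Malliavin derivatives and abstract $L^2$-stability of the linearized BDSDE; instead, it first passes to the Euler scheme $(X^\pi,Y^\pi,Z^\pi)$, uses the variational representation $Z^\pi_t=\nabla^i Y^\pi_t[\nabla X^\pi_t]^{-1}\sigma(X^\pi_t)$ on each subinterval (from Pardoux--Peng \cite{PP1}), and then derives an \emph{explicit closed-form} expression for $\nabla^i Y^\pi_t$ via a Girsanov transform and Dol\'eans--Dade exponentials (Lemma~\ref{L3}). This explicit formula is what allows the paper to split $|Z^\pi_t-Z^\pi_{t_{i-1}}|$ into four concrete terms and estimate each by hand. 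Your Malliavin route can be made to work, but note two points: (i) what you need is the $L^2$-regularity of $t\mapsto Z_t=D_tY_t$ along the diagonal, not merely of $r\mapsto D_rY_t$ for fixed $t$, so the ``stability of the linear BDSDE'' argument must be applied with some care to cover both variables; and (ii) your reduction $\E\int|Z_s-\tilde Z^\pi_{t_{i-1}}|^2ds\le\E\int|Z_s-Z_{t_{i-1}}|^2ds$ presupposes that $Z_{t_{i-1}}$ is well defined pointwise, which is precisely part of what the regularity result delivers---the paper sidesteps this by comparing $Z$ to $Z^\pi_{t_{i-1}}$ (the Euler-scheme value) instead.

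There is a genuine gap in your treatment of the $Y$-part. The lemma is stated for the \emph{generalized} BDSDE \eqref{eqpart0} with path-dependent terminal condition $\phi^\pi(X_{t_0},\dots,X_{t_n})$, so the Markovian representation $Y_t=u(t,X_t)$ that you invoke to control $\E\max_i\sup_{t}|Y_t-Y_{t_{i-1}}|^2$ simply does not hold. One can salvage the idea by using a piecewise representation $Y_t=u^i(t,X_{t_0},\dots,X_{t_{i-1}},X_t)$ on $[t_{i-1},t_i)$ and checking that the Lipschitz constants are uniform in $i$ and $\pi$ (this is where the ``adequate norm'' condition on $\phi^\pi$ enters), but as written your argument does not apply. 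The paper handles this step more directly, obtaining $\E|Y_t-Y_{t_{i-1}}|^2\le C|\pi|$ from the a priori estimates of Lemma~\ref{L2} together with the uniform bound on $\E|Z_t|^2$ coming from the representation of $Z^\pi$, and then appealing to BDG for the supremum.
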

Before prove this important theorem, we state the following needed result. To this end let us assume the following: $\phi^{\pi}\in C_{b}^{1}(\R^{d(n+1)})$, $f\in C_{b}^{0,1}([0,T]\times\R^{d}\times\R^{2})$ and $g\in C_{b}^{0,1}([0,T]\times\R^{d}\times\R)$. Moreover, for all $x = (x_0,....,x_n)\in\R^{d(n+1)}$,
\begin{eqnarray}
\sum_{i=0}^{n}|h_{x_i}^{\pi}(x)|\leq C.\label{bound}
\end{eqnarray}
We also design by $\varphi_{u}$ the partial differential of $\varphi$ which respect the variable $u$.

Next, we denote by $\nabla X^{\pi}$ the solution of the following variational equation:,
\begin{eqnarray}
\nabla X_t^{\pi}=I_d+\int^{t}_{0}b_x(X^{\pi}_r)\nabla X^{\pi}_rdr+\int^{t}_{0}\sigma_{x}(X^{\pi}_r)\nabla X^{\pi}_rdW_r,
\end{eqnarray}
and by $(\nabla^{i}Y^{\pi},\nabla^{i}Z^{\pi})$ the solution of the following BDSDE on $[t_{i-1}, T ]$:
\begin{eqnarray}
\nabla^{i}Y_{t}^{\pi}&=&\sum_{j\geq i}^{n}h^{\pi}_{x_j}(X_{t_0}^{\pi},...,X_{t_n}^{\pi})\nabla X^{\pi}_{t_j}+\int^{T}_{t}[f_{x}(\Theta^{\pi}_r)\nabla
X_{r}^{\pi} +f_{y}(\Theta^{\pi}_r)\nabla^{i}Y_{r}^{\pi}
+f_{z}(\Theta^{\pi}_r)\nabla^{i}Z_{r}^{\pi}]dr\nonumber\\
&&+\int^{T}_{t}[g_x(\Xi^{\pi}_r)\nabla X_{r}^{\pi}+g_y(\Xi^{\pi}_r)\nabla^{i} Y_{r}^{\pi})]\overleftarrow{dB}_{r}-\int^{T}_{t}\nabla^{i}
Z_{r}^{\pi}dW_{r}, \;\; t\in[t_{i+1},T],\nonumber\\ \mbox{for}\, i = 1,...,n.
\label{eqvaria}
\end{eqnarray}
On the other hand, we denote by
\begin{align}
\nabla^{\pi}Y_{t}^{\pi}=\sum_{i=1}^{n}\nabla^{i}Y_{t}^{\pi}{\bf 1}_{[t_{i-1},t_{i})}(t)+\nabla^{n}Y_{T^-}^{\pi}{\bf 1}_{\{T\}}(t),\;\;\;\; t\in[0,T];
\end{align}
hence $\nabla^{\pi}Y^{\pi}$ is a c\`{a}dl\`{a}g process.

For application convenience, we shall rewrite $\nabla^{\pi}Y^{\pi}$ in another form. Note that for
each $i$ $(\ref{eqvaria})$ is linear. Let $(\gamma^{0},\zeta^{0})$ and $(\gamma^{j},\zeta^{j}), \; j = 1,..., n$ be the adapted solutions
of the BDSDEs
\begin{eqnarray}
\gamma^{0}_{t}&=&\int^{T}_{t}[f_{x}(\Theta^{\pi}_r)\nabla X_{r}^{\pi} +f_{y}(\Theta^{\pi}_r)\gamma_{r}^{0}+f_{z}(\Theta^{\pi}_r)\zeta^{0}_{r}]dr
\nonumber\\
&&+\int^{T}_{t}[g_x(\Xi^{\pi}_r)\nabla X_{r}^{\pi}+g_y(\Xi^{\pi}_r)\gamma^{0}_{r}]d\overleftarrow{B}_{r}-\int^{T}_{t}\zeta^{0}_{r}dW_{r},\label{c8}\\
\gamma_{t}^{j}&=& h^{\pi}_{x_j}(X^{\pi}_{t_0},.....,X^{\pi}_{t_n})\nabla X^{\pi}_{t_j}+\int^{T}_{t}[f_{y}(\Theta^{\pi}_r)\gamma_{r}^{j}+f_{z}(\Theta^{\pi}_r)\zeta_{r}^{j}]dr\nonumber\\
&&+\int^{T}_{t} g_y(\Xi^{\pi}_r)\gamma_{r}^{j}d\overleftarrow{B}_{r}-\int^{T}_{t}\zeta_{r}^{j}dW_{r},\nonumber
\end{eqnarray}
respectively, then we have the following decomposition:
\begin{eqnarray}
\nabla^{i}Y_{s}=\gamma_{s}^{0}+\sum_{j\geq i}\gamma_{s}^{j},\;\;\;\;\;\;s\in [t_{i-1},t_i).\label{c9}
\end{eqnarray}

We may simplify $(\ref{c9})$ further. Let us define, for any $\eta\in L^{1}({\bf F},[0,T])$ and $(\Theta_1,\Theta_2)\in L^{2}({\bf F},[0,T];\R)\times L^{2}({\bf F},[0,T];\R)$,
\begin{eqnarray*}
\Lambda_{t}^{s}(\eta)&=&\exp\left(\int_{s}^{t}\eta(r)dr\right),\;\;\; s,t\in[0,T],\\
^1{\mathcal{E}}_t^{s}(\Theta_1)&=&\exp\left\{\int_{s}^{t}\Theta_1(r) dW_r-\frac{1}{2}\int_s^t |\Theta_1(r)|^{2}dr\right\},\;\;\; s,t\in[0,T],\\
^2\mathcal{E}_t^{s}(\Theta_2)&=&\exp\left\{\int_{s}^{t}\Theta_2(r) d\overleftarrow{B}_r-\frac{1}{2}\int_s^t |\Theta_2(r)|^{2}dr\right\},\;\;\; s,t\in[0,T]. \label{m}
\end{eqnarray*}
($^1\mathcal{E}_t^{s}(\Theta_1)$ and $^2\mathcal{E}_t^{s}(\Theta_2)$ are respectively the well known Dal\'{e}an-Dade stochastic exponential of $\Theta_1$ with respect $W$ and $\Theta_2$ with respect $B$). Then it is easily checked that, for any $p > 0$, one has
\begin{eqnarray}
[^i\mathcal{E}_t^{s}(\Theta_i)]^{p}=\,^i\mathcal{E}_t^{s}(p\Theta_i)\Lambda_{t}^{s}(\frac{p(p-1)}{2}|\Theta_i|^2),\label{lambda}
\end{eqnarray}
and
\begin{eqnarray}
[^i\mathcal{E}_t^{s}(\Theta_i)]^{-1}= \,^i\mathcal{E}_t^{s}(-\Theta_i)\Lambda_{t}^{s}(|\Theta_i|^2),\;\;\; i=1,2.\label{lambda1}
\end{eqnarray}
In particular, we denote, for $s, t\in[0, T]$,
\begin{eqnarray}
\Lambda_{t}^{s}&=&\Lambda_{t}^{s}(-f_y)\ ^2\mathcal{E}_t^{s}(-g_y),\;\;\, M_t^{s}=\ ^1\mathcal{E}_t^{s}(f_z),\label{lambda2}
\end{eqnarray}
and if there is no danger of confusion, we denote $\Lambda_{.}= \Lambda_{.}^{0}$ and $M_.=M_.^{0}$.
Since $f_z$ is uniformly bounded, by Girsanov's Theorem (see, e.g., \cite{KS}) we know that $M$ is a $\P$-martingale on $[0,T]$, and $\widetilde{W}_{t}=W_{t}-\int_{0}^{t}f_z (\Theta^{\pi}_s)dr,\, t\in[0,T]$ is an ${\bf F}$-Brownian motion on the new probability space $(\Omega,\mathcal{F},\widetilde{\P}),$ where $\widetilde{\P}$ is defined by $\frac{d\widetilde{\P}}{d\P}=M_T$. Moreover noting that $f_y,\, f_z$ and $g_y$ are uniformly bounded, by virtue of $(\ref{lambda})$ and $(\ref{lambda1})$ one can
deduce easily from $(\ref{lambda2})$ that, for $p\geq 1$, there exists a constant $C_p$ depending only on $T, C$ and $p$, such that
\begin{eqnarray}
&&\E\left(\sup_{0\leq t\leq T}|\Lambda_{t}|^{p}+|\Lambda_{t}^{-1}|^{p}\right)\leq C_{p};\;\;\E\left(\sup_{0\leq t\leq T}[|M_{t}|^{p}+|M_{t}^{-1}|^{p}]\right)\leq C_{p};\nonumber\\
\nonumber\\
&&\E\left(|\Lambda_{t}-\Lambda_s|^{p}+|\Lambda_{t}^{-1}-\Lambda^{-1}_s|^{p}\right)\leq C_{p}|t-s|^{p/2};\label{SE}\\
\nonumber\\
&&\E\left(|M_{t}-M_s|^{p}+|M_{t}^{-1}-M^{-1}_s|^{p}\right)\leq C_{p}|t-s|^{p/2}.
\nonumber
\end{eqnarray}
\begin{lemma}
\label{L3}
Assume $\sigma,\,b\in C_{b}^{1}$ and $f,\,g,\,l$ satisfy the previous assumptions. Then for all $i=1,...,n$
\begin{eqnarray*}
\nabla^i Y_{t}^{\pi}=\left(\xi_{t}^{0}+\sum_{j\geq i}\xi^{j}_{t}\right)M^{-1}_t\Lambda_{t}-\int_{0}^{t}f_{x}(\Theta^{\pi}_r)\nabla X_{r}^{\pi}\Lambda^{-1}_{r}dr\Lambda_{t}-\int_{0}^{t}g_x(\Xi^{\pi}_r)\nabla X_{r}^{\pi}\Lambda^{-1}_{r}d\overleftarrow{B}_r\Lambda_{t},
\end{eqnarray*}
where $\xi_{t}^{0}$ and $\xi_{t}^{j}$, for $j=1,\cdot\cdot\cdot,n$ will be explicit in the proof.
\end{lemma}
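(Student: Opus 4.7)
The plan is to exploit the linearity of the BDSDEs \eqref{c8} together with the additive decomposition \eqref{c9}: since on $[t_{i-1},t_i)$ one has $\nabla^{i}Y_s^{\pi}=\gamma^{0}_s+\sum_{j\ge i}\gamma^{j}_s$, it suffices to produce an explicit formula for each $\gamma^{j}$ ($j\ge 1$) and for $\gamma^{0}$ individually, and then to sum. The key observation is that the stochastic exponentials $M$ and $\Lambda$ introduced in \eqref{lambda2} are tailored precisely to eliminate the uniformly bounded linear coefficients $f_y, f_z, g_y$ appearing in \eqref{c8}.

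I would first handle $\gamma^{j}$ for $j\ge 1$, whose BDSDE is linear with terminal datum $h^{\pi}_{x_{j}}(X^{\pi}_{t_0},\ldots,X^{\pi}_{t_n})\nabla X^{\pi}_{t_j}$ and no inhomogeneity. Applying the doubly stochastic It\^o formula to the triple product $\gamma^{j}_t M_t \Lambda_t^{-1}$, the increment $d\Lambda_t^{-1}$ cancels both the $f_y\gamma^{j}\,dr$ drift and the $g_y\gamma^{j}\,d\overleftarrow{B}_r$ backward term of $\gamma^{j}$, while the forward quadratic covariation between $\gamma^{j}$ and $M$ absorbs the $f_z\zeta^{j}\,dr$ contribution. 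What remains is a genuine $\mathbf{F}$-martingale, so setting
\[
\xi^{j}_t \;:=\; \E\!\left[\,h^{\pi}_{x_{j}}(X^{\pi}_{t_0},\ldots,X^{\pi}_{t_n})\,\nabla X^{\pi}_{t_j}\,M_T\,\Lambda_T^{-1}\,\big|\,\mathcal{F}_t\right]
\]
and taking conditional expectation yields the clean representation $\gamma^{j}_t = \xi^{j}_t\, M_t^{-1}\Lambda_t$.

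For $\gamma^{0}$ the same multiplicative transformation still kills the linear coefficients, but the two inhomogeneities $f_x(\Theta^{\pi}_r)\nabla X^{\pi}_r\,dr$ and $g_x(\Xi^{\pi}_r)\nabla X^{\pi}_r\,d\overleftarrow{B}_r$ from \eqref{c8} now survive, each weighted by the factor $M_r\Lambda_r^{-1}$. Splitting the resulting representation over $[0,t]$ and $[t,T]$, defining $\xi^{0}_t$ as the $\mathcal{F}_t$-conditional expectation of the $[t,T]$-contribution, and reverting the multiplicative normalisation (so that $M_t^{-1}\Lambda_t$ multiplies the martingale part and $\Lambda_t$ alone multiplies the $[0,t]$-part) produces exactly the two residual integrals $-\int_0^t f_x(\Theta^{\pi}_r)\nabla X^{\pi}_r\Lambda_r^{-1}dr\,\Lambda_t$ and $-\int_0^t g_x(\Xi^{\pi}_r)\nabla X^{\pi}_r\Lambda_r^{-1}d\overleftarrow{B}_r\,\Lambda_t$ displayed in the statement. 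Summing $\gamma^{0}_t+\sum_{j\ge i}\gamma^{j}_t$ then gives the announced identity.

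The main obstacle I expect is the doubly stochastic It\^o calculus for the triple product $\gamma\, M\,\Lambda^{-1}$: one has to track simultaneously the forward It\^o integral against $W$ and the backward It\^o integral against $\overleftarrow{B}$, verify that only the forward covariation with $W$ contributes a drift (this is what allows $M$ to cancel the $f_z$-term), and check that the backward covariations involving $\overleftarrow{B}$ contribute only to a martingale part that vanishes under $\mathcal{F}_t$-conditioning, using the independence structure encoded in the filtration $\mathcal{F}_t=\mathcal{F}^{W}_t\otimes\mathcal{F}^{B}_{T}\vee\mathcal{N}$. The uniform integrability estimates \eqref{SE} on $\Lambda, \Lambda^{-1}, M, M^{-1}$, combined with the $L^{p}$-bounds on $\nabla X^{\pi}$ inherited from Lemma~\ref{L1}, guarantee that all conditional expectations involved are well defined. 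Once this It\^o computation is written out explicitly, matching the signs and prefactors to obtain the stated formula is a routine algebraic rearrangement.
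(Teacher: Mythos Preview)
Your overall strategy is correct and coincides with the paper's: exploit linearity via the decomposition \eqref{c9}, remove the bounded coefficients $f_y,f_z,g_y$ by an exponential transformation, and reduce to a martingale representation. For $j\ge 1$ your triple product $\gamma^{j}M\Lambda^{-1}$ is indeed a $\P$-martingale and yields exactly the paper's $\xi^{j}_t=\E[M_T\widetilde{\xi}^{j}\mid\mathcal{F}_t]$. The paper reaches the same conclusion by the dual route: it multiplies by $\Lambda^{-1}$ \emph{only}, observes that the leftover drift $f_z\widetilde{\zeta}^{j}\,dr$ combines with the $dW$-integral into $d\widetilde{W}$ under the Girsanov measure $\widetilde{\P}$ with density $M_T$, and then invokes the Bayes rule $\widetilde{\E}[\,\cdot\mid\mathcal{F}_t]=M_t^{-1}\E[M_T\,\cdot\mid\mathcal{F}_t]$ to recover the factor $M_t^{-1}$.

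For $\gamma^{0}$, however, your triple-product route does \emph{not} reproduce the residual integrals displayed in the lemma. If you multiply $\gamma^{0}$ by $M\Lambda^{-1}$, the surviving inhomogeneities carry the weight $M_r\Lambda_r^{-1}$ (as you yourself note), so after inverting the normalisation the $[0,t]$-integrals contain an extra factor $M_t^{-1}M_r$, not $\Lambda_r^{-1}$ alone. Your parenthetical claim that ``$\Lambda_t$ alone multiplies the $[0,t]$-part'' is therefore inconsistent with the transformation you propose. The paper avoids precisely this by multiplying $\gamma^{0}$ only by $\Lambda^{-1}$ (see the definition of $\widetilde{\gamma}^{0}_t$) and absorbing the $f_z$-drift into the Brownian increment via Girsanov; that is why the residuals in the statement involve $\Lambda_r^{-1}$ but no $M_r$. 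To prove the lemma as written, follow the paper's $\Lambda^{-1}$-plus-Girsanov route for $\gamma^{0}$; your direct $M\Lambda^{-1}$ product gives a valid but different representation.
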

\begin{proof}
Let us denote the following:
\begin{eqnarray*}
\begin{array}{l}
\widetilde{\xi}^{0}=\int_{0}^{T}f_{x}(\Theta^{\pi}_r)\nabla X_{r}^{\pi}\Lambda^{-1}_{r}dr+\int_{0}^{T}g_x(\Xi^{\pi}_r)\nabla X_{r}^{\pi}\Lambda^{-1}_{r}d\overleftarrow{B}_{r},\;\;\widetilde{\zeta}^{0}_t=\zeta^{0}\Lambda^{-1}_t,\\\\ \widetilde{\gamma}^{0}_t=\gamma^{0}_t\Lambda^{-1}_t+\int_{0}^{t}f_{x}(\Theta^{\pi}_r)\nabla X_{r}^{\pi}\Lambda^{-1}_{r}dr+\int_{0}^{t}g_x(\Xi^{\pi}_r)\nabla X_{r}^{\pi}\Lambda^{-1}_{r}d\overleftarrow{B}_{r}\\\\
\widetilde{\xi}^{i}=h^{\pi}(X^{\pi}_{t_0},\cdot\cdot\cdot,X^{\pi}_{t_n})\nabla X_{t_i}^{\pi}\Lambda^{-1}_{T},\;\; \widetilde{\zeta}^{i}_t=\zeta^{i}_t\Lambda^{-1}_t,\;\;\widetilde{\gamma}^{i}_t=\gamma^{i}_t\Lambda^{-1}_t.
\end{array}
\end{eqnarray*}
Then, using integration by parts and equation $(\ref{c8})$ we have, for $i=0,1,....,n$,
\begin{eqnarray*}
\widetilde{\gamma}^{i}_{t}=\widetilde{\xi}^{i}
-\int_{t}^{T}\widetilde{\zeta}^{i}_rd\widetilde{W}_{r}, \;\; t\in[0,T],
\end{eqnarray*}
so that, $\int_{0}^{t}\widetilde{\zeta}^{i}_r d\widetilde{W}_{r}$ being a uniformly integrable martingale with in particular zero expectation, we get
\begin{eqnarray*}
\widetilde{\gamma}^{i}_{t}=\widetilde{\E}(\widetilde{\xi}^{i}|\mathcal{F}_t).
\end{eqnarray*}
Therefore, by the Bayes rule (see e.g, \cite{KS} Lemma 3.5.3) we have for $t\in[0,T]$
\begin{eqnarray*}
\gamma^{0}_{t}&=&\widetilde{\gamma}^{0}_{t}\Lambda_t-\int_{0}^{t}f_{x}(\Theta^{\pi}_r)\nabla X_{r}^{\pi}\Lambda^{-1}_{r}dr\Lambda_{t}-\int_{0}^{t}g_{x}(\Xi^{\pi}_r)\nabla X_{r}^{\pi}\Lambda^{-1}_{r}d\overleftarrow{B}_{r}\Lambda_{t}\nonumber\\
&=&\xi_{t}^{0} M^{-1}_t\Lambda_t-\int_{0}^{t}f_{x}(\Theta^{\pi}_r)\nabla X_{r}^{\pi}\Lambda^{-1}_{r}dr\Lambda_{t}-\int_{0}^{t}g_x(\Xi^{\pi}_r)\nabla X_{r}^{\pi}\Lambda^{-1}_{r}d\overleftarrow{B}_{r}\Lambda_{t},\nonumber\\
\gamma^{i}_{t}&=&\widetilde{\gamma}^{i}_{t}\Lambda_t=\widetilde{\E}\left(\widetilde{\xi}^{i}|\mathcal{F}_{t}\right)\Lambda_t
=\E\left(M_T\widetilde{\xi}^{i}|\mathcal{F}_{t}\right)M^{-1}_t\Lambda_t=\xi_{t}^{i} M^{-1}_t\Lambda_t,
\end{eqnarray*}
where, for $i=0,1,....,n$,
\begin{eqnarray}
\xi_{t}^{i}=\E\left(M_T\widetilde{\xi}^{i}|\mathcal{F}_{t}\right)=\E\left(M_T\widetilde{\xi}^{i}\right)+\int_{0}^{t}\chi_s^{i} dW_s.\label{xi}
\end{eqnarray}
Note that the boundedness of $f_z$ and $(\ref{SE})$ imply that $M_{T}\in L^{p}(\Omega)$ and $\nabla X\in L^{p}({\bf F},C([0,T];\M^{d}))$ for all $p\geq 2$. Therefore for each $p\geq 1$, $(\ref{bound})$ leads to
\begin{eqnarray*}
\E\left\{\sum_{j=0}^{n}|M_T\widetilde{\xi}^{j}|\right\}\leq C\E\left\{|M_{T}|^{p}\sup_{0\leq t\leq T}|\nabla X_t|^{p}\right\}\leq C.
\end{eqnarray*}
In particular, for each $j=0,\cdot\cdot\cdot,n$, $M_{T}\widetilde{\xi}^{j}\in L(\mathcal{F}_{T})$. So  $(\ref{xi})$ makes sens. Finally the result follows by $(\ref{c9})$.
\end{proof}
\begin{proof} [Proof of Lemma 2.4]
For all $1\leq i\leq n$ and each $t\in[t_{i-1},t_i)$, applying Lemma\,$\ref{L2}$, we get
\begin{eqnarray*}
\E\left(|Y_{t}-Y_{t_{i-1}}|^{2}\right)\leq C|\pi|.
\end{eqnarray*}
Then by Burkölder-Davis-Gundy inequality we have
\begin{align}
\E\left[\max_{1\leq i\leq n}\sup_{t_{i-1}\leq t\leq t_i}|Y_{t}-Y_{t_{i-1}}|^{2}\right]\leq C|\pi|.\label{EC}
\end{align}

The estimate for the second term of the left hand in $(\ref{a"61})$ is little involved. First we assume that $b,\sigma,\,\phi^{\pi},\ f,\ g\in C^{1}_{b}$ such that $\phi^{\pi}$ satisfied $(\ref{bound})$.
Let recall $(Y^{\pi},Z^{\pi})$ denote the adapted solution to the BDSDE $(\ref{eqpart})$ and $X^{\pi}$ the solution of the Euler scheme associated to EDS $(\ref{a_{0}})$. Under the Lipschitz conditions on $b$ and $\sigma$, we have
\begin{eqnarray}
\lim_{\pi\rightarrow 0}\max_{1\leq i\leq n} \E\left[\sup_{0\leq t\leq T}|X^{\pi}_{t}-X_{t}|^{2}+\sup_{t_{i-1}\leq t\leq t_{i}}|X_{t}-X_{t_{i-1}}|^{2}\right]=0. \label{a4.}
\end{eqnarray}
Now by the Lipschitz assumption on $\phi^{\pi}$ and $(\ref{a4.})$, applying Lemma 2.2 we know that
\begin{eqnarray}
\lim_{\pi\rightarrow 0}\E\left\{\sup_{0\leq t\leq T}|Y^{\pi}_{t}-Y_{t}|^{2}+\int^{T}_{0}|Z^{\pi}_{t}-Z_{t}|^{2}dt\right\}=0.\label{b1}
\end{eqnarray}
Recalling $(\ref{a"61})$ and applying Lemma 3.4.2 of Zhang \cite{Z}  we have
\begin{eqnarray}
&&\sum_{i=1}^{n}\E\left[\int^{t_i}_{t_{i-1}}|Z_{s}-\tilde{Z}^{\pi}_{t_{i-1}}|^{2}ds\right]\leq \sum_{i=1}
^{n}\E\left[\int^{t_i}_{t_{i-1}}|Z_{s}-Z^{\pi}_{t_{i-1}}|^{2}ds\right]\nonumber\\
&\leq&
2\sum_{i=1}^{n}\E\left[\int^{t_i}_{t_{i-1}}(|Z_{s}-Z^{\pi}_{s}|^{2}+|Z^{\pi}_{s}-Z^{\pi}_{t_{i-1}}|^{2})ds\right].
\label{b'2}
\end{eqnarray}
By $(\ref{b1})$ and $(\ref{b'2})$, to estimate the second term and prove the theorem it remain to show that
\begin{eqnarray}
\sum_{i=1}^{n}\E\left[\int^{t_i}_{t_{i-1}}|Z^{\pi}_{s}-Z^{\pi}_{t_{i-1}}|^{2}ds\right]\leq C|\pi|,\label{R}
\end{eqnarray}
where $C$ is independent of $\pi$.

To do this, let us recall that from Proposition 2.3 of \cite{PP1} and its proof, we know that the martingale part $Z^{\pi}$ has a continuous version given by
\begin{eqnarray*}
Z^{\pi}_{t}=\nabla^{i}Y_{t}^{\pi}[\nabla X^{\pi}_{t}]^{-1}\sigma(X^{\pi}_{t}),\;\; \forall\, t\in[t_{i-1},t_i),\label{Z1}
\end{eqnarray*}
which together with Lemma 2.5 provide
\begin{eqnarray*}
Z^{\pi}_{t}=\left[\left(\xi^{0}_{t}+\sum_{j\geq i}\xi^{j}_{t}\right)M^{-1}_{t}-\int_{0}^{t}f_{x}(\Theta^{\pi}_{r})\nabla X_{r}^{\pi}\Lambda^{-1}_{r}dr-\int_{0}^{t}g_{x}(\Xi^{\pi}_{r})\nabla X_{r}^{\pi}\Lambda^{-1}_{r}d\overleftarrow{B}_r\right]\Lambda_{t}[\nabla
 X^{\pi}_{t}]^{-1}\sigma(X_{t}^{\pi}).
\end{eqnarray*}
Therefore,
\begin{eqnarray}
|Z^{\pi}_{t}-Z^{\pi}_{t_{i-1}}|\leq I_{t}^{1}+I_{t}^{2}+I_{t}^{3}+I_{t}^{4}\label{Z}
\end{eqnarray}
 where
\begin{eqnarray*}
I_{t}^{1}&=&\left|[\xi_{t}^{0}+\sum_{j\geq i}\xi_{t}^{j}]-[\xi_{t_{i-1}}^{0}+\sum_{j\geq t_{i-1}+1}\xi_{t_{i-1}}^{j}]\right|
\times \left|M_{t_{i-1}}^{-1}\Lambda_{t_{i-1}}[\nabla X^{\pi}_{t_{i-1}}]^{-1}\sigma(X^{\pi}_{t_{i-1}}\right|,\\
I_{t}^{2}&=&\left|\xi_{t}^{0}+\sum_{j\geq i}\xi_{t}^{j}\right|\left|M_{t}^{-1}\Lambda_{t}[\nabla X^{\pi}_{t}]^{-1}\sigma(X^{\pi}_{t})-M_{t_{i-1}}^{-1}\Lambda_{t_{i-1}}[\nabla
 X^{\pi}_{t_{i-1}}]^{-1}\sigma(X^{\pi}_{t_{i-1}})\right|,\\
 I_{t}^{3}&=&\left|\int_{0}^{t}f_{x}(r)\nabla X^{\pi}_{r}\Lambda^{-1}_{r}dr\Lambda_{t}[\nabla X^{\pi}_{t}]^{-1}\sigma(X_{t}^{\pi})
-\int_{0}^{t_{i-1}}f_{x}(r)\nabla X^{\pi}_{r}\Lambda^{-1}_{r}dr\Lambda_{t_{i-1}}[\nabla X^{\pi}_{t_{i-1}}]^{-1}\sigma(X_{t_{i-1}}^{\pi})\right|,\\
I_{t}^{4}&=&\left|\int_{0}^{t}g_{x}(r)\nabla X^{\pi}_{r}\Lambda^{-1}_{r}d\overleftarrow{B}_r\Lambda_{t}[\nabla X^{\pi}_{t}]^{-1}\sigma(X_{t}^{\pi})
-\int_{0}^{t_{i-1}}g_{x}(r)\nabla X^{\pi}_{r}\Lambda^{-1}_{r}d\overleftarrow{B}_r\Lambda_{t_{i-1}}[\nabla X^{\pi}_{t_{i-1}}]^{-1}\sigma(X_{t_{i-1}}^{\pi})\right|.
\end{eqnarray*}
Recalling $(\ref{SE})$ and applying Lemma\,$\ref{L1}$ and Lemma\,$\ref{L2}$, one can easily prove that
\begin{eqnarray}
\E(|I_{t}^{3}|^{2}+|I_{t}^{4}|^{2})\leq C|\pi|.\label{I3}
\end{eqnarray}
Recalling $(\ref{xi})$ and $(\ref{bound})$, we have
\begin{eqnarray*}
|\xi^{0}_t+\sum_{j\geq i}\xi^{j}_t|\leq C\E(\sup_{0\leq t\leq T}|\nabla X_t^{\pi}|\mid\mathcal{F}_{t}).
\end{eqnarray*}
Thus by using again Lemma\,$\ref{L1}$ and Lemma\,$\ref{L2}$ one can similarly show that
\begin{eqnarray}
\E(|I_{t}^{2}|^{2})\leq C|\pi|.\label{I2}
\end{eqnarray}
It remains to estimate $I^{1}_t$. To this end we denote
\begin{align*}
\Gamma_t=\sup_{0\leq s\leq t}\left\{1+|X^{\pi}_s|+|[\nabla X^{\pi}_s]^{-1}|+|M^{-1}_s|\right\}.
\end{align*}
Noting that $\Lambda$ is bounded and that $\Gamma_{t_{i-1}}\in\mathcal{F}_{t_{i-1}},$ by $(\ref{xi})$, we have
\begin{eqnarray*}
\E|I_t^{1}|^{2}&\leq &C\E\left\{\Gamma_{t_{i-1}}^{6}\left|[\xi^{0}_{t}+\sum_{j\geq i}\xi^{j}_{t}]-[\xi^{0}_{t_{i-1}}+\sum_{j\geq i}\xi^{j}_{t_{i-1}}]\right|^{2}\right\}\\
&\leq&C\E\left\{\Gamma_{t_{i-1}}^{6}\E\left\{|\xi^{0}_{t}-\xi^{0}_{t_{i-1}}|^{2}+\sum_{j\geq i}|\xi^{j}_{t}-\xi^{j}_{t_{i-1}}|^{2}|\mathcal{F}_{t_{i-1}}\right\}\right\}\\
&\leq &C\E\left\{\Gamma_{t_{i-1}}^{6}\left[\int_{t_{i-1}}^{t_{i}}|\chi^{0}_r|^{2}dr+\int_{t_{i-1}}^{t_{i}}\left|\sum_{j\geq i}\chi^{j}_r\right|^{2}dr\right]\right\}.
\end{eqnarray*}
Therefore, by following the step of \cite{Z}, we get
\begin{eqnarray}
\sum_{i=1}^{n}\E\left(\int_{t_{i-1}}^{t_{i}}|I_t^{1}|^{2}dt\right)\leq C|\pi|\E(\Gamma^{12}_{T})\leq C|\pi|.\label{I1}
\end{eqnarray}
Combining $(\ref{I3}),\, (\ref{I2})$ and $(\ref{I1})$, we infer $(\ref{R})$ from $(\ref{Z})$. This, together with
$(\ref{b'2})$, leads to
\begin{eqnarray*}
\sum_{i=1}^{n}\int^{t_i}_{t_{i-1}}|Z_{s}-\tilde{Z}^{\pi}_{t_{i-1}}|^{2}ds\leq C|\pi|,
\end{eqnarray*}
which ends the estimate of the second term for the smooth case.

In general case, let $b^{\varepsilon},\, \sigma^{\varepsilon},\, \phi^{\pi,\varepsilon},\,f^{\varepsilon}$ and $g^{\varepsilon}$ be molifiers of $b,\,\sigma,\, \phi^{\pi},\, f$ and $g$, respectively, and let $(Y ^{\varepsilon}, Z^{\varepsilon})$ solution of BDSDE
\begin{eqnarray*}
Y_{t}^{\varepsilon,\pi}=\phi^{\pi,\varepsilon}(X_{t_0}^{\varepsilon,\pi},...,X_{t_n}^{\varepsilon,\pi})+\int_{t}^{T}f^{\varepsilon}(\Theta^{\varepsilon,\pi}_{s})\,
ds+\int_{t}^{T}g^{\varepsilon}(\Xi^{\varepsilon,\pi}_{s})d\overleftarrow{B_{s}} -\int_{t}^{T}Z_{s}^{\varepsilon,\pi}dW_{s},\;\; 0\leq t\leq T,
\end{eqnarray*}
where $X^{\varepsilon,\pi}$ is the well-know  Euler approximation of the diffusion $X^{\varepsilon}$, the solution to the corresponding forward SDE $(\ref{a_{0}})$ modified in an obvious way. Then by the above arguments we have
\begin{eqnarray}
\sum_{i=1}^{n}\int^{t_i}_{t_{i-1}}|Z_{s}^{\varepsilon}-\tilde{Z}^{\pi,\varepsilon}_{t_{i-1}}|^{2}ds\leq C|\pi|.\label{G1}
\end{eqnarray}
Therefore using again Lemma 3.4.2 of Zhang, \cite{Z}, we have
\begin{eqnarray}
&&\sum_{i=1}^{n}\int^{t_i}_{t_{i-1}}|Z_{s}-\tilde{Z}^{\pi}_{t_{i-1}}|^{2}ds\leq \sum_{i=1}^{n}\int^{t_i}_{t_{i-1}}|Z_{s}-\tilde{Z}^{\pi,\varepsilon}_{t_{i-1}}|^{2}ds\nonumber\\
&\leq&\sum_{i=1}^{n}\int^{t_i}_{t_{i-1}}[|Z_{s}-Z^{\varepsilon}_{s}|^{2}+|Z_{s}^{\varepsilon}-\tilde{Z}^{\pi,\varepsilon}_{t_{i-1}}|^{2}]ds\nonumber\\
&\leq& C\left\{\E\int^{T}_{0}|Z_{s}-Z^{\varepsilon}_{s}|^{2}ds+|\pi|\right\}.\label{G2}
\end{eqnarray}
Applying Lemma 2.2 we have
\begin{eqnarray*}
\lim_{\varepsilon\rightarrow 0}\E\int^{T}_{0}|Z_{s}-Z^{\varepsilon}_{s}|^{2}ds=0,
\end{eqnarray*}
which, combined with $(\ref{G2})$, proves the estimate of the second term of $(\ref{a"6})$ and together with $(\ref{EC})$ prove the theorem.
\end{proof}

\section{Discrete-time approximation error}
\setcounter{theorem}{0} \setcounter{equation}{0}
In order to approximate the solution of the above decoupled FBDSDE $(\ref{FBSDE})$, we introduce the following discretized version. Let
$\pi:\, t_{0}<t_{1}<.....<t_{n}=T$ be the partition of the time interval $[0,T]$ with mesh
\[|\pi|=\max_{1\leq i\leq n}|t_{i}-t_{i-1}|\] defined in the previous section.
Throughout the rest of the paper, we will use the notations.
\begin{eqnarray*}
\bigtriangleup^{\pi}_{i}=t_{i}-t_{i-1},\,\,\,
\bigtriangleup^{\pi}W_{i}=W_{t_{i}}-W_{t_{i-1}} ,\,\,\mbox{and}\,\,\bigtriangleup^{\pi}B_{i}=B_{t_{i}}-B_{t_{i-1}}\;\; \mbox{for}\,\,
i=1,...,n.
\end{eqnarray*}
The forward component will be approximated by the classical Euler scheme
\begin{eqnarray}
X^{\pi}_{t_{0}}&=& X_{t_{0}}, \nonumber\\\label{a3}\\\nonumber
X^{\pi}_{t_{i}}&=&X^{\pi}_{t_{i-1}}+b(X^{\pi}_{t_{i-1}})\bigtriangleup^{\pi}_{i}
+\sigma(X^{\pi}_{t_{i-1}})\bigtriangleup^{\pi}W_{i}\;\; \mbox{for}\;\; i=1,...,n
\end{eqnarray}
and we set
\begin{eqnarray*}
X^{\pi}_{t}=X^{\pi}_{t_{i-1}}+b(X^{\pi}_{t_{i-1}})(t-t_{i-1})
+\sigma(X^{\pi}_{t_{i-1}})(W_{t}-W_{t_{i-1}})\;\; \mbox{for}\;\; t\in(t_{i-1},t_i).
\end{eqnarray*}
We shall denote by $\displaystyle{\{\mathcal{F}^{\pi}_{t_{i}}\}_{0\leq i\leq n}}$ the
associated discrete-time filtration define by
$$\mathcal{F}^{\pi}_{t_{i}}=\mathcal{F}_{t_{i}}^{W}\vee \mathcal{F}^{B}_{T}.$$

Under the Lipschitz conditions on $b$ and $\sigma$, the following $L{^p}$ estimate for the error due to the Euler scheme is well known
\begin{eqnarray}
\limsup_{\pi\longrightarrow 0}|\pi|^{-1/2}\max_{1\leq i\leq n} \E\left[\sup_{0\leq t\leq T}|X^{\pi}_{t}-X_{t}|^{p}+\sup_{t_{i-1}\leq t\leq t_{i}}|X_{t}-X_{t_{i-1}}|^{p}\right]^{1/p}<\infty, \label{a4}
\end{eqnarray}
for all $p\geq 1$ (see e.g Kloeden and Platen, \cite{KP}). We next consider the following
natural discrete-time approximation of the backward component $Y$:
\begin{eqnarray}
&&Y_{t_n}^{\pi}= h(X^{\pi}_{T}),\; Z_{t_n}^{\pi}=0\nonumber\\\nonumber\\
&&Z^{\pi}_{t_{i-1}}=\frac{1}{\bigtriangleup_{i}^{\pi}}\E^{\pi}_{i-1}[\left(Y_{t_{i}}^{\pi}
+g(t_i,X_{t_i}^{\pi},Y_{t_{i}}^{\pi})\bigtriangleup^{\pi}
B_i\right)\bigtriangleup^{\pi}W_{i}],\label{a5}\\\nonumber\\
&&Y^{\pi}_{t_{i-1}}=\E_{i-1}^{\pi}[Y_{t_{i}}^{\pi}+g(t_{i},X^{\pi}_{t_{i}},Y^{\pi}_{t_{i}})\bigtriangleup^{\pi}B_{i}]
+f(t_{i-1},X^{\pi}_{t_{i-1}},Y^{\pi}_{t_{i-1}},Z^{\pi}_{t_{i-1}})\bigtriangleup_{i}^{\pi},\label{a5'}
\end{eqnarray}
where $\displaystyle{\E_{i}^{\pi}[.]=\E[.|\mathcal{F}_{t_i}^{\pi}]}$.
The above conditional expectation are
well defined at each step of the algorithm. Indeed using the backward induction argument, it easily checked
 that $\displaystyle{Y^{\pi}_{t_{i}}\in L^{2}}$ for all $i$.
\begin{remark}
Using the induction argument, it easily seen that the random variable $Y^{\pi}_{t_{i}}$ and $Z^{\pi}_{t_{i}}$ are $\omega_1$ deterministic function of $X^{\pi}_{t_{i}}$  for each $i=0,...,n$. Then using the fixed point of Banach argument $(\ref{a5'})$ have a unique solution when the mesh of the partition $|\pi|$ is small enough.
\end{remark}

For later use, we need a continuous-time approximation of $(Y,Z)$. Since $Y_{t_{i}}^{\pi}$\newline $+g(t_i,X_{t_i}^{\pi},Y_{t_{i}}^{\pi})\bigtriangleup^{\pi}B_i=\tilde{Y}^{\pi}_{t_i}$ being in $L^{2}$ for all $1\leq i\leq n$, an obvious extension of It\^{o} martingale representation theorem yields the existence of the $\mathcal{F}_{t}$-progressively measurable and square integrable process $Z^{\pi}$ satisfying
\begin{eqnarray}
\tilde{Y}^{\pi}_{t_{i}}=\E[\tilde{Y}^{\pi}_{t_{i}}|\mathcal{F}^{\pi}_{t_{i-1}}]+\int_{t_{i-1}}^{t_{i}}Z^{\pi}_{s}dW_{s}.\label{TR}
\end{eqnarray}
Then we define inductively
\begin{eqnarray}
Y^{\pi}_{t}&=&Y_{t_{i-1}}^{\pi}-(t-t_{i-1})f(t_{i-1},X^{\pi}_{t_{i-1}},Y^{\pi}_{t_{i-1}},Z^{\pi}_{t_{i-1}})
-g(t_i,X^{\pi}_{t_i},Y^{\pi}_{t_i})(B_{t}-B_{t_{i-1}})\nonumber\\
&&+\int^{t}_{t_{i-1}}Z^{\pi}_s dW_s,\;\;\;\;\;\;  t_{i-1}<t\leq t_i .
\label{a6}
\end{eqnarray}
The following property of the $Z^{\pi}$ is needed for the proof of the main result of this
section.
\begin{lemma}
\label{L4}
For all $1\leq i\leq n$, we have
\begin{eqnarray*}
\bigtriangleup_{i}^{\pi}Z^{\pi}_{t_{i-1}}=\E_{i-1}^{\pi}\left[\int^{t_i}_{t_{i-1}}Z^{\pi}_{s}ds\right].
\end{eqnarray*}
\end{lemma}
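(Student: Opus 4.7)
The plan is to combine the martingale representation (\ref{TR}) with the scheme's definition (\ref{a5}) of $Z^{\pi}_{t_{i-1}}$. Setting $\tilde{Y}^{\pi}_{t_{i}} = Y^{\pi}_{t_{i}} + g(t_{i},X^{\pi}_{t_{i}},Y^{\pi}_{t_{i}})\Delta^{\pi}B_{i}$, the defining relation (\ref{a5}) reads $\Delta^{\pi}_{i}Z^{\pi}_{t_{i-1}} = \E^{\pi}_{i-1}[\tilde{Y}^{\pi}_{t_{i}}\Delta^{\pi}W_{i}]$, while (\ref{TR}) decomposes $\tilde{Y}^{\pi}_{t_{i}}$ as the sum of its $\mathcal{F}^{\pi}_{t_{i-1}}$-conditional mean and the stochastic integral $\int_{t_{i-1}}^{t_{i}} Z^{\pi}_{s}\,dW_{s}$. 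Multiplying this decomposition by $\Delta^{\pi}W_{i}$ and applying $\E^{\pi}_{i-1}$ therefore splits the calculation into two separate contributions.

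For the first, $\E^{\pi}_{i-1}[\tilde{Y}^{\pi}_{t_{i}}]$ is $\mathcal{F}^{\pi}_{t_{i-1}}$-measurable and can be pulled out, so the task reduces to showing $\E^{\pi}_{i-1}[\Delta^{\pi}W_{i}] = 0$. The conditioning field $\mathcal{F}^{\pi}_{t_{i-1}} = \mathcal{F}^{W}_{t_{i-1}} \vee \mathcal{F}^{B}_{T}$ strictly contains $\mathcal{F}^{W}_{t_{i-1}}$, but the mutual independence of the Brownian motions $W$ and $B$ (which live on distinct factor spaces in the product structure of Section~2) ensures that $\Delta^{\pi}W_{i}$ is independent both of $\mathcal{F}^{W}_{t_{i-1}}$ and of $\mathcal{F}^{B}_{T}$. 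Hence $\E^{\pi}_{i-1}[\Delta^{\pi}W_{i}] = \E[\Delta^{\pi}W_{i}] = 0$ and this contribution drops out.

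For the second, writing $\Delta^{\pi}W_{i} = \int_{t_{i-1}}^{t_{i}} dW_{s}$ and invoking the conditional It\^o isometry yields
\[
\E^{\pi}_{i-1}\!\left[\Delta^{\pi}W_{i} \int_{t_{i-1}}^{t_{i}} Z^{\pi}_{s}\,dW_{s}\right] = \E^{\pi}_{i-1}\!\left[\int_{t_{i-1}}^{t_{i}} Z^{\pi}_{s}\,ds\right],
\]
which is exactly the right-hand side of the claim. The only delicate point is the validity of the conditional It\^o isometry relative to the enlarged filtration $\mathbf{F}$, which requires $W$ to remain a Brownian motion after augmenting by the terminal information $\mathcal{F}^{B}_{T}$; this is the standard consequence of the independence of $W$ and $B$ that was already implicitly used in order to make sense of (\ref{TR}). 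Combining the two contributions produces $\Delta^{\pi}_{i} Z^{\pi}_{t_{i-1}} = \E^{\pi}_{i-1}[\int_{t_{i-1}}^{t_{i}} Z^{\pi}_{s}\,ds]$, as required.
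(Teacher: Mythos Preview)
Your proof is correct and follows essentially the same route as the paper: substitute the martingale representation (\ref{TR}) into the definition (\ref{a5}), kill the conditional-mean contribution via $\E^{\pi}_{i-1}[\Delta^{\pi}W_{i}]=0$, and finish with the (conditional) It\^o isometry. You are simply more explicit than the paper about why the enlarged filtration $\mathcal{F}^{W}_{t_{i-1}}\vee\mathcal{F}^{B}_{T}$ does not disturb these identities, which is a welcome clarification but not a different argument.
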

\begin{proof}
Since
\begin{eqnarray*}
\bigtriangleup^{\pi}_{i}Z^{\pi}_{t_{i-1}}&=&\frac{1}{\bigtriangleup_{i}^{\pi}}\E^{\pi}_{i-1}[\left(Y_{t_{i}}^{\pi}
+g(t_i,X_{t_i}^{\pi},Y_{t_{i}}^{\pi})\bigtriangleup^{\pi}
B_i\right)\bigtriangleup^{\pi}W_{i}],
\end{eqnarray*}
recalling $(\ref{TR})$, we have
\begin{eqnarray*}
Z^{\pi}_{t_{i-1}}=\frac{1}{\bigtriangleup_{i}^{\pi}}\E^{\pi}_{i-1}\left[\bigtriangleup^{\pi}W_{i}\int^{t_i}_{t_{i-1}}Z^{\pi}_s dW_s\right].
\end{eqnarray*}
The result follows by It\^{o}'s isometry.
\end{proof}

We also need the following estimate, which is a particular case of Lemma 2.4.
\begin{lemma}
\label{L5}
For each $1\leq i\leq n$, we define
\begin{eqnarray*}
\tilde{Z}^{\pi}_{t_{i-1}}&=&\frac{1}{\bigtriangleup_{i}^{\pi}}\E_{i-1}^{\pi}\left[\int^{t_i}_{t_{i-1}}Z_{s}ds\right].
\label{a'6}
\end{eqnarray*}
Then
\begin{eqnarray}
&&\limsup_{\pi\rightarrow 0}|\pi|^{-1}\E\left[\max_{1\leq i\leq n}\sup_{t_{i-1}\leq t\leq t_i}|Y_{t}-Y_{t_{i-1}}|^{2}+\sum_{i=1}^{n}\int^{t_i}_{t_{i-1}}|Z_{s}-\tilde{Z}^{\pi}_{t_{i-1}}|^{2}ds\right]< \infty.
\label{a"6}
\end{eqnarray}
\end{lemma}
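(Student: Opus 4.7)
The plan is to recognize that the statement is a direct specialization of Lemma~2.4 (the general $L^2$-regularity result already established in Section~2), so essentially no new work is required — only a verification that the hypotheses of Lemma~2.4 apply to the particular BDSDE~(\ref{a00}).

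First I would rewrite the BDSDE~(\ref{a00}) in the form~(\ref{eqpart0}) by defining
\[
\phi^{\pi}(x_0,\ldots,x_n) := h(x_n),\qquad (x_0,\ldots,x_n)\in\R^{d(n+1)}.
\]
With this choice, the solution $(Y,Z)$ of~(\ref{a00}) coincides with the solution of the generalized equation~(\ref{eqpart0}), and the conditional expectation $\tilde{Z}^{\pi}_{t_{i-1}}$ defined here is identical to the one defined in the statement of Lemma~2.4. Thus it suffices to check that $\phi^{\pi}$ meets the assumptions under which Lemma~2.4 was proved.

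Next I would verify the Lipschitz hypothesis in $({\bf H2})$ for this $\phi^{\pi}$: since $h$ is Lipschitz with constant $C$ by assumption $({\bf H2})(iii)$, we have
\[
|\phi^{\pi}(x_0,\ldots,x_n)-\phi^{\pi}(x'_0,\ldots,x'_n)|^2 = |h(x_n)-h(x'_n)|^2 \leq C|x_n-x'_n|^2,
\]
with a constant independent of $n$ and $\pi$. For the smooth case treated inside the proof of Lemma~2.4, one mollifies $h$ to obtain $h^{\varepsilon}\in C_b^{1}(\R^d)$ and correspondingly sets $\phi^{\pi,\varepsilon}(x_0,\ldots,x_n):=h^{\varepsilon}(x_n)$, which clearly lies in $C_b^{1}(\R^{d(n+1)})$. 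The uniform bound~(\ref{bound}) is in fact trivially satisfied here, because only the last partial derivative is nonzero:
\[
\sum_{i=0}^{n}\bigl|\phi^{\pi,\varepsilon}_{x_i}(x)\bigr| \;=\; |h^{\varepsilon}_{x}(x_n)| \;\leq\; C,
\]
with $C$ independent of the partition $\pi$ (this is precisely where having $\phi^{\pi}$ depend only on the terminal coordinate makes the verification painless).

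Since all the assumptions of Lemma~2.4 are satisfied with constants uniform in $\pi$, applying Lemma~2.4 directly yields
\[
\limsup_{\pi\rightarrow 0}|\pi|^{-1}\E\left[\max_{1\leq i\leq n}\sup_{t_{i-1}\leq t\leq t_i}|Y_{t}-Y_{t_{i-1}}|^{2}+\sum_{i=1}^{n}\int^{t_i}_{t_{i-1}}|Z_{s}-\tilde{Z}^{\pi}_{t_{i-1}}|^{2}ds\right]< \infty,
\]
which is exactly~(\ref{a"6}). The only point requiring care is that the constants arising from the mollification and from the bound~(\ref{bound}) remain independent of the mesh $|\pi|$, and this is automatic in the present setting because the terminal condition $\phi^{\pi}=h\circ\operatorname{pr}_n$ does not depend on $n$ at all in a nontrivial way. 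I do not expect any substantial obstacle; the content of this lemma is absorbed by Lemma~2.4.
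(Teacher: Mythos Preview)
Your proposal is correct and matches the paper's own treatment: the paper states explicitly that this lemma ``is a particular case of Lemma~2.4'' and gives no separate proof. Your additional verification that $\phi^{\pi}(x_0,\ldots,x_n):=h(x_n)$ satisfies the Lipschitz hypothesis and the bound~(\ref{bound}) with constants independent of $\pi$ is exactly the check one must make, and it is indeed trivial here.
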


We are now ready to state our main result of this section, which provides the rate of convergence of the
approximation scheme $(\ref{a5})$ and $(\ref{a5'})$ of the same order than Bouchard and Touzi \cite{Tal}.
\begin{theorem}
\label{T1}
\begin{eqnarray*}
Err_{\pi}(Y,Z)=\left\{\sup_{0\leq t\leq T}\E|Y_{t}-Y_{t}^{\pi}|^{2}+\E\left[\int^{T}_{0}|Z_{s}-Z^{\pi}_{s}|^{2}ds\right]\right\}^{1/2}< C|\pi|^{1/2}.
\end{eqnarray*}
\end{theorem}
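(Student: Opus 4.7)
I will follow the backward-induction approach of Bouchard and Touzi \cite{Tal}, adapted to the doubly stochastic setting where the $g\,d\overleftarrow{B}$ term must be handled via a backward It\^{o} isometry and where the crucial aggregate bound of order $|\pi|$ is supplied by the $L^{2}$-regularity result of Lemma~2.4. Set $\delta Y_t:=Y_t-Y^{\pi}_t$ and $\delta Z_s:=Z_s-Z^{\pi}_s$. At the terminal time, the Lipschitz assumption on $h$ and the classical Euler bound $(\ref{a4})$ yield $\E|\delta Y_{t_n}|^{2}\leq C\,\E|X_T-X^{\pi}_T|^{2}\leq C|\pi|$.

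\textbf{One-step recursion.} A straightforward backward induction on the scheme shows that $Y^{\pi}_{t_i}$ is $\mathcal{F}^W_{t_i}\vee\mathcal{F}^B_{t_i,T}$-measurable, so $g(t_i,X^{\pi}_{t_i},Y^{\pi}_{t_i})\bigtriangleup^{\pi}B_i=\int_{t_{i-1}}^{t_i}g(t_i,X^{\pi}_{t_i},Y^{\pi}_{t_i})\,d\overleftarrow{B}_s$. Subtracting the continuous interpolation $(\ref{a6})$ from $(\ref{a00})$ on $[t_{i-1},t_i]$ therefore gives
\begin{align*}
\delta Y_{t_{i-1}}=\delta Y_{t_i} &+\int_{t_{i-1}}^{t_i}\!\big[f(s,X_s,Y_s,Z_s)-f(t_{i-1},X^{\pi}_{t_{i-1}},Y^{\pi}_{t_{i-1}},Z^{\pi}_{t_{i-1}})\big]\,ds\\
&+\int_{t_{i-1}}^{t_i}\!\big[g(s,X_s,Y_s)-g(t_i,X^{\pi}_{t_i},Y^{\pi}_{t_i})\big]\,d\overleftarrow{B}_s-\int_{t_{i-1}}^{t_i}\!\delta Z_s\,dW_s.
\end{align*}
Squaring, taking expectation, and using the forward and backward It\^{o} isometries, the independence of $W$ and $B$ (which kills the cross term between the two martingale parts), the Lipschitz hypothesis $({\bf H2})$, and a Young inequality to separate the implicit $Y^{\pi}_{t_{i-1}},Z^{\pi}_{t_{i-1}}$-terms coming from $f$, I expect
\begin{align*}
(1-C\bigtriangleup^{\pi}_i)\,\E|\delta Y_{t_{i-1}}|^{2}+\E\!\int_{t_{i-1}}^{t_i}\!|\delta Z_s|^{2}ds\leq(1+C\bigtriangleup^{\pi}_i)\,\E|\delta Y_{t_i}|^{2}+C\,R^{\pi}_i,
\end{align*}
where the residual $R^{\pi}_i:=\E\!\int_{t_{i-1}}^{t_i}\!\big(|Y_s-Y_{t_{i-1}}|^{2}+|Z_s-\widetilde{Z}^{\pi}_{t_{i-1}}|^{2}+|X_s-X^{\pi}_{t_{i-1}}|^{2}\big)\,ds+|\pi|^{3}$ collects all the time-regularity contributions.

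\textbf{Aggregation and sup-bound.} Iterating the recursion from $i=n$ down and applying discrete Gronwall (valid for $|\pi|$ small enough that $1-C|\pi|\geq 1/2$, the same smallness condition required for well-posedness of $(\ref{a5'})$) produces
\[
\max_{0\leq k\leq n}\E|\delta Y_{t_k}|^{2}+\E\!\int_0^{T}\!|\delta Z_s|^{2}ds\leq C\Big(\E|\delta Y_{t_n}|^{2}+\sum_{i=1}^{n}R^{\pi}_i\Big).
\]
Lemma~\ref{L1} and $(\ref{a4})$ bound the $X$-piece of $\sum_i R^{\pi}_i$ by $C|\pi|$, while Lemma~2.4 bounds the $Y$- and $Z$-regularity pieces by $C|\pi|$, so the right-hand side is $O(|\pi|)$. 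This gives the announced $|\pi|^{1/2}$ rate at the grid nodes. To pass to $\sup_{0\leq t\leq T}\E|\delta Y_t|^{2}$, I apply $(\ref{a6})$ and $(\ref{a00})$ on an arbitrary $[t_{i-1},t]$: combined with Lemma~\ref{L2}, the incremental error is controlled by $\E|\delta Y_{t_{i-1}}|^{2}+C\bigtriangleup^{\pi}_i$, both of which are $O(|\pi|)$.

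\textbf{Main obstacle.} The delicate point is establishing the one-step recursion, specifically the $L^{2}$-reduction of the backward-It\^{o} discrepancy $\int_{t_{i-1}}^{t_i}[g(s,X_s,Y_s)-g(t_i,X^{\pi}_{t_i},Y^{\pi}_{t_i})]\,d\overleftarrow{B}_s$ to the regularity residual in $R^{\pi}_i$ plus an $O(\bigtriangleup^{\pi}_i)\,\E|\delta Y_{t_i}|^{2}$ contribution, arising from $|Y_s-Y^{\pi}_{t_i}|^{2}\leq 2|Y_s-Y_{t_i}|^{2}+2|\delta Y_{t_i}|^{2}$. It is precisely this decomposition that produces the $(1+C\bigtriangleup^{\pi}_i)$ factor on the right-hand side of the recursion and, in combination with the $f_y,f_z$-Lipschitz terms that have to be absorbed on the left, dictates the smallness condition on $|\pi|$ under which the discrete Gronwall argument closes.
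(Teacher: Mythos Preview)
Your overall strategy coincides with the paper's: derive a one-step energy inequality on each subinterval, feed in the Lipschitz conditions with a Young parameter, iterate, and close using the $L^{2}$-regularity Lemma~\ref{L5}. The terminal estimate, the aggregation by discrete Gronwall, and the passage to $\sup_{t}\E|\delta Y_t|^{2}$ are all handled correctly, and the paper proceeds the same way (it uses It\^{o}'s formula rather than squaring, but these are equivalent here).

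There is, however, a real gap in your one-step recursion. The $f$-increment in $(\ref{a6})$ involves $Z^{\pi}_{t_{i-1}}$ from the \emph{discrete scheme} $(\ref{a5})$, so after Lipschitz you have to control $\E\int_{t_{i-1}}^{t_i}|Z_s-Z^{\pi}_{t_{i-1}}|^{2}\,ds$, not the term $\E\int_{t_{i-1}}^{t_i}|Z_s-\widetilde{Z}^{\pi}_{t_{i-1}}|^{2}\,ds$ that you put in $R^{\pi}_i$. The bridge between the two is exactly Lemma~\ref{L4}, which you never invoke: it gives $Z^{\pi}_{t_{i-1}}=\frac{1}{\Delta^{\pi}_i}\E^{\pi}_{i-1}\big[\int_{t_{i-1}}^{t_i}Z^{\pi}_s\,ds\big]$, so that
\[
\widetilde{Z}^{\pi}_{t_{i-1}}-Z^{\pi}_{t_{i-1}}=\frac{1}{\Delta^{\pi}_i}\E^{\pi}_{i-1}\Big[\int_{t_{i-1}}^{t_i}\delta Z_s\,ds\Big],
\qquad
\Delta^{\pi}_i\,\E\big|\widetilde{Z}^{\pi}_{t_{i-1}}-Z^{\pi}_{t_{i-1}}\big|^{2}\leq \E\!\int_{t_{i-1}}^{t_i}\!|\delta Z_s|^{2}\,ds.
\]
This extra $\E\int|\delta Z|^{2}$ contribution carries \emph{no} $\Delta^{\pi}_i$-factor, so it is not absorbed by smallness of $|\pi|$ as your ``Main obstacle'' paragraph suggests; it must instead be absorbed on the left via a free Young parameter. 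The paper does this explicitly: it writes $\frac{C}{\beta}\int|\delta Z|^{2}$ on the right (the $\beta$ coming from $2\langle\delta Y_s,\delta f(s)\rangle\leq \beta|\delta Y_s|^{2}+\beta^{-1}|\delta f(s)|^{2}$) and takes $\beta$ large, obtaining a coefficient $(1-\tfrac{C}{\beta})$ in front of $\int|\delta Z|^{2}$. Your displayed recursion, with unit coefficient on $\E\int|\delta Z|^{2}$ and only $(1-C\Delta^{\pi}_i)$ on $\E|\delta Y_{t_{i-1}}|^{2}$, is therefore not achievable as written. Once you add Lemma~\ref{L4} and the $\beta$-device, your argument is complete and matches the paper's.
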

\begin{proof}
In the following,  $C>0$ will denote the generic constant independent of $i$ and $n$ that may take values from line to line. Let $i\in\{0,...,n-1\}$ be fixed, and set
\begin{eqnarray*}
\delta^{\pi}Y_{t}&=&Y_{t}-Y_{t}^{\pi},\;\; \delta^{\pi}Z_{t}=Z_{t}-Z_{t}^{\pi},\;\;\delta^{\pi}f(t)=f(t,X_{t},Y_{t},Z_{t})-f(t_{i},X_{t_{i}}^{\pi},Y_{t_{i}}^{\pi},Z_{t_{i}}^{\pi})\\
&&\mbox{and}\;\; \delta^{\pi}g(t)=g(t,X_{t},Y_{t},)-g(t_{i+1},X_{t_{i+1}}^{\pi},Y_{t_{i+1}}^{\pi}),
\end{eqnarray*}
for $t\in[t_{i},t_{i+1})$. By It\^{o}'s formula, we compute that
\begin{eqnarray*}
V_t&=&\E|\delta^{\pi}Y_{t}|^{2}+\E\int^{t_{i+1}}_{t}|\delta^{\pi}Z_{s}|^{2}ds-|\delta^{\pi}Y_{t_{i+1}}|^{2}\\
&=&2\E\int^{t_{i+1}}_{t}\langle
\delta^{\pi}Y_{s},\delta^{\pi}f(s)\rangle ds +\int^{t_{i+1}}_{t}|\delta^{\pi}g(s)|^{2}ds
,\;\;\; t_{i}\leq t\leq t_{i+1}.
\end{eqnarray*}

Let $\beta>0$ be a constant to be chosen later. From Lipschitz property of $f,\ g$ and $h$, together with the
inequality $ab\leq \beta a^{2}+b^{2}/\beta$ this provides
\begin{eqnarray}
V_{t}&\leq& \frac{C}{\beta}\int_{t}^{t_{i+1}}\E\left\{|\pi|^{2}+|X_{s}-X^{\pi}_{t_{i}}|^{2}
+|Y_{s}-Y^{\pi}_{t_{i}}|^{2}+|Z_{s}-Z^{\pi}_{t_{i}}|^{2}\right\}ds\nonumber\\
&&+\int_{t}^{t_{i+1}}C\E\left\{|\pi|^{2}+|X_{s}-X^{\pi}_{t_{i+1}}|^{2}
+|Y_{s}-Y^{\pi}_{t_{i+1}}|^{2}\right\}ds\nonumber\\
&&+\beta\int_{t}^{t_{i+1}}\E|\delta^{\pi}Y_{s}|^{2}ds.
\label{T3.2.1}
\end{eqnarray}
Now observe that
\begin{eqnarray}
\begin{array}{l}
\E|X_{s}-X^{\pi}_{t_{i}}|^{2}+\E|X_{s}-X^{\pi}_{t_{i+1}}|^{2}\leq C|\pi|,\\\\
\E|Y_{s}-Y^{\pi}_{t_{i}}|^{2}\leq 2\left(\E|Y_{s}-Y_{t_{i}}|^{2}
+\E|\delta^{\pi}Y_{t_{i}}|^{2}\right)\leq C\left(|\pi|+\E|\delta^{\pi}Y_{t_{i}}|^{2}\right)\\\\
\E|Y_{s}-Y^{\pi}_{t_{i+1}}|^{2}\leq 2\left(\E|Y_{s}-Y_{t_{i+1}}|^{2}+\E|\delta^{\pi}Y_{t_{i+1}}|^{2}\right)\leq C\left(|\pi|+\E|\delta^{\pi}Y_{t_{i+1}}|^{2}\right)
\end{array}
\label{T3.2.3}
\end{eqnarray}
by $(\ref{a4})$ and $(\ref{a"6})$. Also, with the notation of Lemma $3.3$, it follows from Lemma $3.2$ that
\begin{eqnarray}
\E|Z_{s}-Z^{\pi}_{t_{i}}|^{2}&\leq& 2\left(\E|Z_{s}-\tilde{Z}^{\pi}_{t_{i}}|^{2}+\E|\tilde{Z}^{\pi}_{t_{i}}-Z^{\pi}_{t_{i}}
|^{2}\right)\nonumber\\
& = & 2\left(\E|Z_{s}-\tilde{Z}^{\pi}_{t_{i}}|^{2}+\E\left|\frac{1}{\Delta_{i+1}^{\pi}}\int_{t_{i}}^{t_{i+1}}\E\left(\delta^{\pi}Z_{r}
|\mathcal{F}_{t_{i}}\right)dr\right|^{2}\right)\nonumber\\
&\leq &2\left(\E|Z_{s}-\tilde{Z}^{\pi}_{t_{i}}|^{2}+\frac{1}{\Delta_{i+1}^{\pi}}\int_{t_{i}}^{t_{i+1}}
\E|\delta^{\pi}Z_{r}|^{2}dr\right)
\label{T3.2.4}
\end{eqnarray}
by Jensen's inequality.

We now plug $(\ref{T3.2.3})$ and $(\ref{T3.2.4})$ into $(\ref{T3.2.1})$ to obtain
\begin{eqnarray*}
V_{t}&\leq& \frac{C}{\beta}\int_{t}^{t_{i+1}}\E\left\{|\pi|
+|\delta^{\pi}Y_{t_i}|^{2}+|Z_{s}-\tilde{Z}^{\pi}_{t_{i}}|^{2}\right\}ds\nonumber\\
&&+C\int_{t}^{t_{i+1}}\E\left\{|\pi|
+|\delta^{\pi}Y_{t_{i+1}}|^{2}\right\}ds\nonumber\\
&&+\frac{1}{\Delta^{\pi}_{i+1}}\frac{C}{\beta}\int_{t}^{t_{i+1}}\int_{t_{i}}^{t_{i+1}}\E|\delta^{\pi}Z_{r}|^{2}drds\nonumber\\
&&+\beta\int_{t}^{t_{i+1}}\E|\delta^{\pi}Y_{s}|^{2}ds\label{T3.2.2}\\
&\leq& \frac{C}{\beta}\int_{t}^{t_{i+1}}\E\left\{|\pi|
+|\delta^{\pi}Y_{t_i}|^{2}+|Z_{s}-\tilde{Z}^{\pi}_{t_{i}}|^{2}\right\}ds\nonumber\\
&&+C\int_{t}^{t_{i+1}}\E\left\{|\pi|
+|\delta^{\pi}Y_{t_{i+1}}|^{2}\right\}ds\nonumber\\
&&+\frac{C}{\beta}\int_{t}^{t_{i+1}}\E|\delta^{\pi}Z_{s}|^{2}ds
+\beta\int_{t}^{t_{i+1}}\E|\delta^{\pi}Y_{s}|^{2}ds.
\label{T3.2.5}
\end{eqnarray*}
From the definition of $V_{t}$ and $(\ref{T3.2.5})$, we see that, for $t_{i}\leq t\leq t_{i+1},$
\begin{eqnarray}
\E|\delta^{\pi}Y_{t}|^{2}+\int_{t}^{t_{i+1}}\E|\delta^{\pi}Z_{s}|^{2}ds\leq\beta\int_{t}^{t_{i+1}}\E|\delta^{\pi}Y_{s}|^{2}ds+A_{i}
\label{T3.2.6}
\end{eqnarray}
where
\begin{eqnarray*}
A_{i}&=&(1+C\pi)\E|\delta^{\pi}Y_{t_{i+1}}|^{2}+\frac{C}{\beta}\left[|\pi|^{2}+|\pi|\E|Y^{\pi}_{t_i}|+\int_{t_{i}}^{t_{i+1}}\E|Z_{s}-\tilde{Z}^{\pi}_{t_{i}}|^{2}ds\right]\\
&&+\frac{C}{\beta}\int_{t_{i}}^{t_{i+1}}\E|\delta^{\pi}Z_{s}|^{2}ds.
\end{eqnarray*}
By Gronwall's Lemma, this shows that $\E|\delta^{\pi}Y_{t}|^{2}\leq A_{i}\mbox{e}^{\beta|\pi|}$ for $t_{i}\leq t<t_{i+1},$
which plugged in the second inequality of $(\ref{T3.2.6})$ provides
\begin{eqnarray}
\E|\delta^{\pi}Y_{t}|^{2}+\int_{t}^{t_{i+1}}\E|\delta^{\pi}Z_{s}|^{2}ds &\leq&  A_{i}\left(1+|\pi|\beta\,\mbox{e}^{\beta|\pi|}\right)\leq A_{i}\left(1+C\beta|\pi|\right)
\label{gronw}
\end{eqnarray}
for $|\pi|$ small enough. For $t=t_{i}$ and
$\beta$ sufficiently large than $C$, such that $\frac{C}{\beta}<1$, we deduce from the last inequality
that
\begin{eqnarray*}
&&\E|\delta^{\pi}Y_{t_i}|^{2}+(1-\frac{C}{\beta})\int_{t_i}^{t_{i+1}}\E|\delta^{\pi}Z_{s}|^{2}ds\nonumber\\
&\leq& (1+C|\pi|)\left\{\E|\delta^{\pi}Y_{t_{i+1}}|^{2}+|\pi|^{2}+
\int_{t_{i}}^{t_{i+1}}\E[|Z_{s}-\tilde{Z}^{\pi}_{t_{i}}|^{2}]ds\right\}
\end{eqnarray*}
for small $|\pi|$.

Iterating the last inequality, we get
\begin{eqnarray*}
&&\E|\delta^{\pi}Y_{t_i}|^{2}+(1-\frac{C}{\beta})\int_{t_i}^{t_{i+1}}\E|\delta^{\pi}Z_{s}|^{2}ds\\
&\leq& (1+C|\pi|)^{T/|\pi|}\left\{\E|\delta^{\pi}Y_{T}|^{2}+|\pi|
+\sum_{i=1}^{n}\int_{t_{i-1}}^{t_{i}}\E[|Z_{s}-\tilde{Z}^{\pi}_{t_{i-1}}|^{2}]ds\right\}.
\end{eqnarray*}
Using the estimate $(\ref{a"6})$, together with the Lipschitz property of $g$ and $(\ref{a4})$,
this provides
\begin{eqnarray}
&&\E|\delta^{\pi}Y_{t_i}|^{2}+(1-\frac{C}{\beta})\int_{t_i}^{t_{i+1}}\E|\delta^{\pi}Z_{s}|^{2}ds\nonumber\\
&\leq& (1+C|\pi|)^{T/\pi}\left\{\E|\delta^{\pi}Y_{T}|^{2}+|\pi|+C|\pi|\right\}\leq C|\pi| \label{est1}
\end{eqnarray}
for small $|\pi|$. Summing up inequality $(\ref{gronw})$ with $t=t_i$, we get
\begin{eqnarray*}
&&\left[1-\frac{C}{\beta}(1+C\beta|\pi|)\right]\int_0^T\E|\delta^{\pi} Z_s|^2 ds\\
&\leq& (1+C\beta|\pi|)\frac{C}{\beta}|\pi|+(1+C\beta|\pi|)(1+C|\pi|)\E|\delta^{\pi}Y_T|^2\\
&&+\left[(1+C\beta|\pi|)\frac{C}{\beta}|\pi|-1\right]\E|\delta^{\pi}Y_0|^2\\
&&+\left[(1+C\beta|\pi|)((1+C|\pi|)+\frac{C}{\beta}|\pi|)-1\right]\sum_{i=1}^{n-1}\E|\delta^{\pi}Y_{t_{i}}|^{2}\\
&&+(1+C\beta|\pi|)\frac{C}{\beta}\sum_{i=0}^{n-1}\int_{t_i}^{t_{i+1}}\E|Z_s-\tilde{Z}_{t_i}^{\pi}|^{2}ds.
\end{eqnarray*}
For $\beta$ sufficiently larger that $C$, this proves that for small $|\pi|$:
\begin{eqnarray*}
\int_0^T\E|\delta^{\pi}Z_s|^2 ds&\leq &C\left[|\pi|+\E|\delta^{\pi}Y_T|^2+|\pi|\sum_{i=1}^{n-1}\E|\delta^{\pi}Y_{t_{i}}|^{2}\right.\\
&&+\left.\sum_{i=0}^{n-1}\E|Z_s-\tilde{Z}_{t_i}^{\pi}|^{2}ds\right],
\end{eqnarray*}
where we recall that $C$ is a generic constant which changes from line to line. We now use $(\ref{est1})$ and $(\ref{a"6})$ to see that
\begin{eqnarray*}
\int_0^T\E|\delta^{\pi} Z_s|^2 ds\leq C|\pi|.
\end{eqnarray*}
Together with Lemma\ $\ref{L5}$ and $(\ref{est1})$, this shows that $A_i\leq C|\pi|,$ and therefore,
\begin{eqnarray*}
\sup_{0\leq t\leq T}|\delta^{\pi}Y_{t}|^{2}\leq C|\pi|.
\end{eqnarray*}
by taking the supremum over $t$ in $(\ref{gronw})$. This end the proof of the theorem.
\end{proof}

{\bf Acknowledgments}\newline The author would like to thank I. Boufoussi for his valuable comments and suggestions and express
his deep gratitude to Y. Ouknine and UCAM Mathematics Department for their friendly hospitality during my stay in Cadi Ayyad University.

\label{lastpage-01}
\end{document}